\newcolumntype{C}[1]{>{\centering\arraybackslash}m{#1}}
\newtheorem{theorem}{Theorem}
\newtheorem{lemma}{Lemma}
\newtheorem{corollary}{Corollary}
\numberwithin{equation}{section}
\newtheorem{postulate}{Postulate}
\newtheorem{remark}{Remark}
\journal{arXiv}
\begin{document}

\begin{frontmatter}

\title{Dispersion-minimized mass for isogeometric analysis}
% of eigenvalue problems in structural mechanics}
%Optimally blended quadratures  
%\tnoteref{mytitlenote}}
%\tnotetext[mytitlenote]{This paper is partially supported by...}

%% Group authors per affiliation:
%\author{M. Barto$\check{\text{n}}$\fnref{myfootnote}}
%\address{Radarweg 29, Amsterdam}
%\fntext[myfootnote]{Since 1880.}

\author[ad,ad1]{Quanling Deng\corref{corr}}
\author[ad,ad1,ad2]{Victor Calo}

%% or include affiliations in footnotes:
%\author[ad]{Quanling Deng}

%\author[ad1]{Michael Barto\v{n}} Michael.Barton@centrum.cz, 

\cortext[corr]{Corresponding author. \\ E-mail addresses: Quanling.Deng@curtin.edu.au; Victor.Calo@curtin.edu.au.}
%\ead{support@elsevier.com}
%, vladimir.puzyrev@gmail.com, vmcalo@gmail.com.

%\author[ad]{Vladimir Puzyrev}

\address[ad]{Curtin Institute for Computation, Curtin University, Kent Street, Bentley, Perth, WA 6102, Australia}

\address[ad1]{Department of Applied Geology, Western Australian School of Mines, Curtin University, Kent Street, Bentley, Perth, WA 6102, Australia}

%\address[ad1]{Basque Center for Applied Mathematics, Alameda de Mazarredo 14, 48009 Bilbao, Basque Country, Spain}
\address[ad2]{Mineral Resources, Commonwealth Scientific and Industrial Research Organisation (CSIRO), Kensington, Perth, WA 6152, Australia}

\begin{abstract}
We introduce the dispersion-minimized mass for isogeometric analysis to approximate the structural vibration which we model as a second-order differential eigenvalue problem. The dispersion-minimized mass reduces the eigenvalue error significantly, from the optimum order of $2p$ to the superconvergence order of $2p+2$ for the $p$-th order isogeometric elements with maximum continuity, which in return leads to more robust of the isogeomectric analysis. We first establish the dispersion error for arbitrary polynomial order isogeometric elements. We derive the dispersion-minimized mass in one dimension by  solving a  $p$-dimensional local matrix problem for the $p$-th order approximation and then extend it to multiple dimensions on tensor-product grids. We show that the dispersion-minimized mass can also be obtained by approximating the mass matrix using optimally blended quadratures. 
We generalize the results of optimally blended quadratures from polynomial orders $p=1,\cdots, 7$ that were studied in \cite{calo2017dispersion} to arbitrary polynomial order isogeometric approximations. Various numerical examples validate the eigenvalue  and eigenfunction error estimates we derive.
\end{abstract}

\begin{keyword}
isogeometric analysis \sep quadrature rule  \sep optimal blending \sep eigenvalue \sep dispersion error \sep dispersion-minimized mass
%\MSC[2010] 00-01\sep  99-00
\end{keyword}

\end{frontmatter}

\linenumbers

\section{Introduction} \label{sec:intr} 
Isogeometric analysis is a widely-used numerical method introduced in 2005 \cite{hughes2005isogeometric,cottrell2009isogeometric}. The motivation was to unify the finite element methods with computer-aided design tools. Under the framework of the classic Galerkin finite element methods, isogeometric analysis uses B-splines or Non-uniform rational basis splines (NURBS) instead of the Lagrange interpolation polynomials as its basis functions. These basis functions have higher continuity (smoother), which in return improves the numerical approximations of real-life problems.

%The authors in \cite{bazilevs2006isogeometric} establishes the approximation, stability, and error estimates.  

The authors in \cite{cottrell2006isogeometric} first use isogeometric analysis to study the structural vibrations and wave propagation problems. Their spectrum analysis shows that isogeometric elements significantly improve the accuracy of the spectral approximation when compared with the classical finite elements. In \cite{hughes2014finite}, the authors explore additional advantages of isogeometric analysis on the spectral approximation over finite elements.

The dispersion analysis is well studied in literature \cite{thompson1994complex,thompson1995galerkin,ihlenburg1995dispersion,ainsworth2004discrete,harari1997reducing,harari2000analytical,he2011dispersion,guddati2004modified}  and the spectral analysis for structural vibrations (eigenvalue problems) has a strong connection with the dispersion analysis for wave propagation. The authors in \cite{hughes2008duality} introduce
a duality principle which establishes a bijective map from spectral analysis to dispersion analysis. The leading order term in the spectrum (eigenvalue) error expansion is the negative of the leading order term in the dispersion error expansion. This duality allows us to utilize the dispersion analysis tools to study the isogeometric spectral approximation properties of the eigenvalue problems; see for example the recent works \cite{bartovn2017generalization,calo2017dispersion,calo2017quadrature,deng2018dispersion,puzyrev2017dispersion,puzyrev2017spectral}.

In \cite{calo2017dispersion,puzyrev2017dispersion}, the authors propose optimally blended quadrature rules to compute the isogemetric stiffness and mass matrices. These quadratures improve the spectral approximation, in particular, the convergence rates in the eigenvalue approximation with respect to the mesh size increase by two extra orders. In \cite{hughes2014finite}, the authors state the Pythagorean eigenvalue error theorem (the proof is done in \cite{strang1973analysis}).  In \cite{puzyrev2017dispersion}, these results are generalized to include the quadrature errors  from the approximations of the inner products associated with the stiffness and mass matrices and in \cite{calo2017spectral} to include the incompatibility of the discrete spaces. Comparisons with quadrature blending rules for finite elements (see for example \cite{marfurt1984accuracy,ainsworth2010optimally}) are made in \cite{puzyrev2017dispersion} and significant error-reductions are observed in isogeometric elements. Other variants of quadrature blendings are studied as well as the superconvergence in eigenvalue errors and the optimal convergence in eigenfunction errors are established in \cite{calo2017dispersion}. To reduce the computational costs for blending two quadrature rules, a single non-standard quadrature rule is developed in \cite{deng2018dispersion} for $C^1$ quadratic isogeometric elements. The paper \cite{puzyrev2017spectral} studies the stopping bands and outliers in the numerical spectral approximations of the classic finite elements or isogeometric elements with variable continuities.

The mass-lumping technique and all the above works (also \cite{harari1997reducing,guddati2004modified,he2011dispersion,yue2005dispersion,ewing1983incorporation}) take the advantage of the mass matrix to reduce the dispersion or spectrum errors.
In this paper, we generalize this collection of insights to introduce the idea of a dispersion-minimized mass. We first establish the dispersion error, which is of the optimal order $2p$ where $p$ is the polynomial approximation order, for isogeometric elements with B-splines of maximum continuity. We view the entries in the mass matrix as  degrees of freedom which allows us to optimize the dispersion errors to be of order $2p+2$. The dispersion error is thus minimized and we refer to these corresponding mass entries as the dispersion-minimized mass entries. To find the dispersion-minimized mass entries, we propose a $p$-dimensional local linear system. The system is non-singular for arbitrary $p$ and it's computationally stable and efficient to invert as the dimension is low.

We also minimize the dispersion error for isogeometric analysis by blending quadrature rules optimally. The optimal blending parameters are given for arbitrary order $p$, which is a generalization of the work \cite{calo2017dispersion}. The generalization is not only on $p$ (from $p=1,\cdots, 7$ to arbitrary) but also on other quadrature rules, that is, not limited to the blendings of the Gauss-Legendre and Gauss-Lobatto rules as in \cite{calo2017dispersion}. These optimally blended rules also lead to a superconvergence of order $2p+2$. In fact, we show that the dispersion-minimized mass entries are the same as those obtained by optimally blended rules. We establish our findings in one dimension and then extend them to multiple dimensions by using the tensor-product grids (see also \cite{ainsworth2010optimally,calo2017dispersion}).

The rest of this paper is organized as follows. Section \ref{sec:ps} presents the problem and its discretization as well as introduces some relevant quadrature rules. In Section \ref{sec:disperr}, we develop several new facts for finite elements with B-spline basis functions, then we present the discrete dispersion errors for arbitrary order isogeometric elements. In Section \ref{sec:dmm}, we introduce the idea of dispersion-minimized mass and establish a superconvergence result of order $2p+2$ for the eigenvalue errors. Section \ref{sec:ob} generalizes the optimal blending quadrature rules of \cite{calo2017dispersion}. We state our main theoretical results in Sections \ref{sec:disperr}
to %, \ref{sec:dmm}, and 
\ref{sec:ob}.
Following the work \cite{calo2017dispersion}, Section \ref{sec:md} presents the generalization to multiple dimensions and the eigenfunction error estimates, while Section \ref{sec:num} collects numerical examples that demonstrate the performance of the proposed blending schemes. 
Concluding remarks are presented in Section \ref{sec:conclusion}.

%To the best of our knowledge, this is the first paper studying the mixed isogeometric analysis of the biharmonic eigenvalue problem with error analysis on the eigenvalue and eigenfunctions.

% the design of optimal quadrature rules which  minimize the dispersion errors of the isogeometric analysis for the wave propagation and structural vibration problems. The dispersion error-minimizing quadratures, that combine Gauss-Legendre  

%The rest of this paper is organized as follows. Section \ref{sec:ps} describes the isogeometric discretization of an eigenvalue problem. In Section \ref{sec:ddr}, we present the constraints minimizing both dispersion error and the number of quadrature points and set up the equations for the quadrature weights and points. Both two-point and 2.5-point rules are considered here. Section \ref{sec:num} studies numerical examples to demonstrate the performance of both the two-point and 2.5-point rules. Concluding remarks are given in Section \ref{sec:conclusion}.

\section{Problem setting} \label{sec:ps}

The classical second-order differential eigenvalue problem that arises in structural mechanics is to find the vibration frequencies $\omega$ and vibration modes $u$ such that
\begin{equation} \label{eq:pde}
\begin{aligned}
- \Delta u & =  \lambda u \quad  \text{in} \quad \Omega, \\
u & = 0 \quad \text{on} \quad \partial \Omega,
\end{aligned}
\end{equation}
where $\lambda = \omega^2$, $\Delta = \nabla^2$ is the Laplacian, $\Omega \subset  \mathbb{R}^d, d=1,2,3$, is a bounded open domain with Lipschitz boundary.
The eigenvalue problem \eqref{eq:pde} has a countable set of eigenvalues $\lambda_j \in \mathbb{R}^+$ \cite{strang1973analysis}
\begin{equation}
0 < \lambda_1 < \lambda_2 \leq \lambda_3 \leq \cdots
\end{equation}
and an associated set of orthonormal eigenfunctions $u_j$, that is
\begin{equation} \label{eq:onu}
(u_j, u_k) = \delta_{jk}, 
\end{equation}
where $(\cdot, \cdot)$ denotes the $L^2-$inner product on $\Omega$ and $\delta_{lm} =1$ when $l=m$ and zero otherwise and is known as the Kronecker delta.

\subsection{Isogeometric discretization}
To discretize \eqref{eq:pde} with isogeometric elements, we first assume that $ \Omega$ is a cube and a uniform tensor product mesh of size $h_x>0, h_y>0, h_z>0$ is placed on $\Omega$. We denote each element as $K$ and its collection as $\mathcal{T}_h$ such that $\bar\Omega = \cup_{K\in \mathcal{T}_h}  K$. Let $h = \max_{K\in \mathcal{T}_h} \text{diameter}(K)$. 
The variational formulation of \eqref{eq:pde} at the continuous level is to find $\lambda \in \mathbb{R}^{+}$ and $u \in H^1_0(\Omega)$ such that 
\begin{equation} \label{eq:vf}
a(w, u) =  \lambda b(w, u), \quad \forall \ w \in H^1_0(\Omega), 
\end{equation}
where 
$
a(w, v) = (\nabla w, \nabla v)
$ and $
b(w, v) = (w, v)
$. 
Here, we denote by $H^m(\Omega)$ the Sobolev-Hilbert spaces and $H^m_0(\Omega)$ the Sobolev-Hilbert spaces with functions vanishing at the boundary for $m>0$, where $m$ specifies the order of weak derivatives. From \eqref{eq:onu}, the normalized eigenfunctions are also orthogonal in the energy inner product
\begin{equation} \label{eq:vfo}
a(u_j, u_k) =  \lambda_j b(u_j, u_k) = \lambda_j \delta_{jk}.
\end{equation}

By specifying a finite dimensional approximation space $V_h \subset H^1_0(\Omega)$ where $V_h = \text{span} \{\phi_a\}$ is the span of the B-spline basis functions $\phi_a$, the isogeometric analysis of \eqref{eq:pde} seeks $\lambda^h \in \mathbb{R}$ and $u^h \in V_h$ such that 
\begin{equation} \label{eq:vfh}
a(w^h, u^h) =  \lambda^h b(w^h, u^h), \quad \forall \ w^h \in V_h.
\end{equation}

The definition of the B-spline basis functions in one dimension is as follows. 
Let $X = \{x_0, x_1, \cdots, x_m \}$ be a knot vector with knots $x_j$, that is, a nondecreasing sequence of real numbers which are called knots.  The $j$-th B-spline basis function of degree $p$, denoted as $\theta^j_p(x)$, is defined as \cite{de1978practical,piegl2012nurbs}
\begin{equation} \label{eq:B-spline}
\begin{aligned}
\theta^j_0(x) & = 
\begin{cases}
1, \quad \text{if} \ x_j \le x < x_{j+1} \\
0, \quad \text{otherwise} \\
\end{cases} \\ 
\theta^j_p(x) & = \frac{x - x_j}{x_{j+p} - x_j} \theta^j_{p-1}(x) + \frac{x_{j+p+1} - x}{x_{j+p+1} - x_{j+1}} \theta^{j+1}_{p-1}(x).
\end{aligned}
\end{equation}

In this paper, we utilize the B-splines on uniform meshes with non-repeating knots, that is, we use B-splines with maximum continuity on uniform meshes.
We approximate the eigenfunctions as a linear combination of the B-spline basis functions and substitute all the B-spline basis functions for $v_h$ in \eqref{eq:vfh} which leads to the matrix eigenvalue problem
\begin{equation} \label{eq:mevp}
\mathbf{K} \mathbf{U} = \lambda^h \mathbf{M} \mathbf{U},
\end{equation}
where $\mathbf{K}_{ab} =  a(\phi_a, \phi_b), \mathbf{M}_{ab} = b(\phi_a, \phi_b),$ and $\mathbf{U}$ is the corresponding representation of the eigenvector as the coefficients of the B-spline basis functions.

\subsection{Quadrature rules}
In practice, we evaluate the integrals involved in $a(u_j^h, v_h) $ and $b(u_j^h, v_h)$ numerically, that is, approximated by quadrature rules. On a reference element $\hat K$, a quadrature rule is of the form
\begin{equation} \label{eq:qr}
\int_{\hat K} \hat f(\hat{\boldsymbol{x}}) \ \text{d} \hat{\boldsymbol{x}} \approx \sum_{l=1}^{N_q} \hat{\varpi}_l \hat f (\hat{n_l}),
\end{equation}
where $\hat{\varpi}_l$ are the weights, $\hat{n_l}$ are the nodes, and $N_q$ is the number of quadrature points. For each element $K$, we assume that there is an invertible map $\sigma$ such that $K = \sigma(\hat K)$, which leads to the correspondence between the functions on $K$ and $\hat K$. Assuming $J_K$ is the corresponding Jacobian of the mapping, \eqref{eq:qr} induces a quadrature rule over the element $K$ given by
\begin{equation} \label{eq:q}
\int_{K}  f(\boldsymbol{x}) \ \text{d} \boldsymbol{x} \approx \sum_{l=1}^{N_q} \varpi_{l,K} f (n_{l,K}),
\end{equation}
where $\varpi_{l,K} = \text{det}(J_K) \hat \varpi_l$ and $n_{l,K} = \sigma(\hat n_l)$. 
For simplicity, we denote by $G_m$ the $m-$point Gauss-Legendre quadrature rule,  by $L_m$ the $m-$point Gauss-Lobatto quadrature rule, by $R_m$ the $m-$point Gauss-Radau quadrature rule, and by $O_p$ the optimal blending scheme for the $p$-th order isogeometric analysis with maximum continuity. In one dimension, $G_m, L_m$, and $R_m$ fully integrates polynomials of order $2m-1, 2m-3,$ and $2m-2$, respectively \cite{bartovn2016optimal,bartovn2016gaussian,bartovn2017gauss}.  

Applying quadrature rules to \eqref{eq:vfh}, we have the approximated form
\begin{equation} \label{eq:vfho}
 a_h(w^h, \tilde u^h) =  \tilde\lambda^h  b_h(w^h, \tilde u^h), \quad \forall \ w^h \in V_h,
\end{equation}
where for $w,v \in V_h$
\begin{equation} \label{eq:ba}
 a_h(w, v) = \sum_{K \in \mathcal{T}_h} \sum_{l=1}^{N_q} \varpi_{l,K}^{(1)} \nabla w (n_{l,K}^{(1)} ) \cdot \nabla v (n_{l,K}^{(1)} )
\end{equation}
and
\begin{equation} \label{eq:bb}
 b_h(w, v) = \sum_{K \in \mathcal{T}_h} \sum_{l=1}^{N_q} \varpi_{l,K}^{(2)} w (n_{l,K}^{(2)} ) v (n_{l,K}^{(2)} )
\end{equation}
where $\{\varpi_{l,K}^{(1)}, n_{l,K}^{(1)} \}$ and $\{\varpi_{l,K}^{(2)}, n_{l,K}^{(2)} \}$ specify two (possibly different) quadrature rules. In one dimension for the $p$-th order isogeometric elements, $G_{p+1}$ integrates these two bilinear forms exactly, that is, for $w,v \in V_h$
\begin{equation} \label{eq:ab=abh}
 a_h(w, v) =  a(w, v), \qquad  b_h(w, v)  =  b(w, v),
\end{equation}
while both $G_p$ and $L_{p+1}$ integrate $a(w, v)$ exactly but under-integrate $b(w,v)$.
With quadrature rules, we can rewrite the matrix eigenvalue problem \eqref{eq:mevp} as
\begin{equation} \label{eq:amevp}
\mathbf{K} \tilde{\mathbf{U}} = \tilde \lambda^h \mathbf{M} \tilde{\mathbf{U}}
\end{equation}
where $\mathbf{K}_{ab} =  a_h(\phi_a, \phi_b), \mathbf{M}_{ab} = b_h(\phi_a, \phi_b),$ and $\tilde{\mathbf{U}}$ is the corresponding representation of the eigenvector as the coefficients of the basis functions.

\section{Dispersion error in 1D} \label{sec:disperr}
In the view of duality principle \cite{hughes2008duality}, which establishes a unified analysis between the spectral analysis for eigenvalue problems and the dispersion analysis for wave propagations, we establish the eigenvalue error estimates by studying the dispersion errors of isogeometric elements for \eqref{eq:pde} with a generic eigen-frequency.  The dispersion and spectrum analysis are also unified in the form of a Taylor expansion for eigenvalue errors in \cite{calo2017dispersion}.

Now, we study the dispersion analysis of the isogeometric elements for \eqref{eq:pde}. The dispersion analysis studies the numerical approximation of the well-known Helmholtz equation 
\begin{equation} \label{eq:pdee}
\begin{aligned}
- \Delta u -  \omega^2 u & = 0 \quad  \text{in} \quad \Omega, \\
u & = 0 \quad \text{on} \quad \partial \Omega,
\end{aligned}
\end{equation}
which we discretize in the same fashion as for the eigenvalue problems, that is
\begin{equation} \label{eq:Hvf}
\begin{aligned}
a(w, u) -  \omega^2 b(w, u) & = 0, \quad \forall \ w \in H^1_0(\Omega), \\
a(w^h, u^h) -  \omega^2 b(w^h, u^h) & = 0, \quad \forall \ w^h \in V_h, \\
a_h(w^h, u^h) -  \omega^2 b_h(w^h, u^h) & = 0, \quad \forall \ w^h \in V_h.
\end{aligned}
\end{equation}

Suppose we utilize the $p$-th order B-spline basis functions in the bilinear forms \eqref{eq:ba} and \eqref{eq:bb} on a uniform mesh of size $h>0$ in 1D and seek an approximation of the eigenfunction the form 
\begin{equation}
\sum_{j} U^j_p \theta^j_p(x),
\end{equation}
where $U^j_p$ are the unknown coefficients which corresponds to the the $p$-th order polynomial approximation which are to be determined.

The classical dispersion analysis of wave propagation problems relies on the Bloch wave assumption \cite{odeh1964partial}, which states that \eqref{eq:pde} admits nontrivial Bloch wave solutions in the form 
\begin{equation} \label{eq:bloch}
U^j_p = e^{ij \mu h },
\end{equation}
 where $i^2 = -1$ and $\mu$ is an approximated frequency. The $C^{p-1}$ B-spline basis function $\theta^j_p$ has a support over $p+1$ elements. Thus, we have
\begin{equation}
\begin{aligned}
a(\theta^j_p, u^h) & = a \Big(\theta^j_p, \sum_{|k-j| \le p} U^k_p \theta^k_p \Big) = A_pU_p / h, \\
b(\theta^j_p, u^h) & = b \Big(\theta^j_p, \sum_{|k-j| \le p} U^k_p \theta^k_p \Big) = B_pU_p h, \\
\end{aligned}
\end{equation}
where 
\begin{equation} \label{eq:UAB}
\begin{aligned}
U_p & = [U^{j-p}_p \quad U^{j-p+1}_p \quad \cdots \quad U^{j}_p \quad \cdots \quad U^{j+p-1}_p \quad U^{j+p}_p ]^T, \\
A_p & = [A^{j-p}_p \quad A^{j-p+1}_p \quad \cdots \quad A^{j}_p \quad \cdots \quad A^{j+p-1}_p \quad A^{j+p}_p ], \\
B_p & = [B^{j-p}_p \quad B^{j-p+1}_p \quad \cdots \quad B^{j}_p \quad \cdots \quad B^{j+p-1}_p \quad B^{j+p}_p ], \\
\end{aligned}
\end{equation}
with 
\begin{equation} \label{eq:ABdefinition}
A^{j-k}_p = a(\theta^j_p, \theta^{j-k}_p) h, \qquad B^{j-k}_p = b(\theta^j_p, \theta^{j-k}_p) / h
\end{equation}
 for $k=p, p-1, \cdots, -p$. The symmetry of the B-spline basis functions  (on uniform meshes and away from the boundaries) further implies that 
\begin{equation} \label{eq:symm}
A^{j-k}_p = A^{j+k}_p, \qquad B^{j-k}_p = B^{j+k}_p.
\end{equation}
Also, the local support of $\theta^j_p$ implies 
\begin{equation} \label{eq:0}
A^{j-k}_p = B^{j-k}_p = 0, \quad \forall \ k > p  \ \text{or }  k < -p.
\end{equation}

%[a(\theta_{j-p}, \theta^j_p) \quad a(\theta_{j-p+1}, \theta^j_p)  \quad \cdots \quad  a(\theta_{j}, \theta^j_p)  \quad \cdots \quad a(\theta_{j+p-1}, \theta^j_p)  \quad a(\theta_{j+p}, \theta^j_p)  ], \\
%[b(\theta_{j-p}, \theta^j_p) \quad b(\theta_{j-p+1}, \theta^j_p)  \quad \cdots \quad  b(\theta_{j}, \theta^j_p)  \quad \cdots \quad b(\theta_{j+p-1}, \theta^j_p)  \quad b(\theta_{j+p}, \theta^j_p)  ], \\

Thus, using the symmetry of \eqref{eq:symm}, the Bloch wave assumption \eqref{eq:bloch} and Euler's formula, one can calculate
\begin{equation} \label{eq:abeh}
\begin{aligned}
a(\theta^j_p, u^h) & = A_pU_p / h = \big(A^j_p + 2\sum_{k=1}^p A^{j+k}_p \cos(k\mu h) \big) e^{ij \mu h } / h, \\
b(\theta^j_p, u^h) & = B_pU_p h = \big(B^j_p + 2\sum_{k=1}^p B^{j+k}_p \cos(k\mu h) \big) e^{ij \mu h } h. \\
\end{aligned}
\end{equation}

Before we derive the dispersion error, we first establish a few lemmas for any order B-spline basis functions with maximum continuity, that is, $C^{p-1}$ for the $p$-th order B-spline basis functions  on a uniform grid on the real line.

\subsection{Preliminary results on B-splines} \label{sec:bspline}
Firstly, we list several known results on the stiffness and mass matrices. In this subsection, we assume that both stiffness and mass matrix entries are integrated exactly and the B-splines of degree $p$ are $C^{p-1}$ and defined on a uniform grid on the one dimensional real number line. 

\begin{lemma} \label{lem:bsp}
The B-splines are symmetric, that is, 
\begin{equation}
\theta^j_p (x) = \theta^j_p (j+ p+1 - x),
\end{equation}
and strictly monotone on $[x_j, x_{j+(p+1)/2}]$ and $[x_{j+(p+1)/2}, x_{j+p+1}]$. Moreover, the scalar products of the B-splines $\theta^j_p (x)$ and $\theta^{j+k}_p (x)$ and of their derivatives satisfy 
\begin{equation} \label{eq:smrec}
\begin{aligned}
A^{j+k}_p &  = 2 B^{j+k}_{p-1} - B^{j+k+1}_{p-1}  - B^{j+k-1}_{p-1}, \\
B^{j+k}_p &  = \theta^j_{2p+1} (j+ k + p+1) = \theta^j_{2p+1} (j- k + p+1), \\
\end{aligned}
\end{equation}
respectively. Lastly, 
\begin{equation} \label{eq:pu}
\begin{aligned}
\sum_{k=-p}^p A^{j+k}_p = \sum_{k=-p}^p B^{j+k}_p  - 1 = 0.
\end{aligned}
\end{equation}
\end{lemma}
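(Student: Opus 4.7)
The plan is to establish the four assertions in the order listed, leaning on the Cox–de Boor recursion \eqref{eq:B-spline}, the standard derivative identity $(\theta^j_p)'(x) = \frac{1}{h}\bigl(\theta^j_{p-1}(x) - \theta^{j+1}_{p-1}(x)\bigr)$ on a uniform grid, the translation relation $\theta^{j+k}_p(x) = \theta^j_p(x-kh)$, and the partition-of-unity $\sum_m \theta^m_p(x) \equiv 1$ in the interior.

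For the symmetry $\theta^j_p(x) = \theta^j_p(j+p+1-x)$, I would induct on $p$. The base case $p=0$ is immediate from \eqref{eq:B-spline}. For the inductive step, apply the substitution $x \mapsto j+p+1-x$ to the recursion: on a uniform grid the two affine coefficients exchange roles, and the inductive hypothesis applied to $\theta^j_{p-1}$ and $\theta^{j+1}_{p-1}$ (whose centers of symmetry are shifted compatibly) swaps the two terms, restoring the original expression. Strict monotonicity on each of the two halves of the support then follows from a companion induction: the derivative identity, together with the translation relation and the inductive strict monotonicity of $\theta^j_{p-1}$, implies that $\theta^j_{p-1}(x) > \theta^{j+1}_{p-1}(x) = \theta^j_{p-1}(x-h)$ on the left half of the support, giving a strictly positive derivative there; the already-established symmetry then mirrors this on the right half.

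The first recursion in \eqref{eq:smrec} drops out by substituting the derivative identity into $A^{j+k}_p = h\int (\theta^j_p)'(\theta^{j+k}_p)'\,dx$: the integrand expands into four products of degree-$(p-1)$ basis functions, and by translation invariance on the uniform grid each integral is, up to the $1/h$ factors, a $B$-entry of order $p-1$ at an appropriately shifted index; collecting the four contributions gives exactly $2B^{j+k}_{p-1} - B^{j+k+1}_{p-1} - B^{j+k-1}_{p-1}$. The second identity $B^{j+k}_p = \theta^j_{2p+1}(j+k+p+1)$ I would obtain from the classical convolution representation of uniform B-splines: a rescaled translate of $\theta^j_p$ is the $(p+1)$-fold convolution of the indicator of a unit interval, so $\int \theta^j_p(x)\theta^{j+k}_p(x)\,dx$ is a value of a $(2p+2)$-fold convolution, i.e., a point-evaluation of $\theta^j_{2p+1}$ at the coordinate encoding the relative shift $k$; the symmetry proved in the first part then yields the two equivalent forms.

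Finally, the sum identities in \eqref{eq:pu} follow by testing against the constant $1$: partition of unity gives $\sum_k b(\theta^j_p,\theta^{j+k}_p) = b(\theta^j_p,1) = \int \theta^j_p\,dx = h$, so $\sum_k B^{j+k}_p = 1$, while $\sum_k a(\theta^j_p,\theta^{j+k}_p) = a(\theta^j_p,1) = 0$ because $\nabla 1 = 0$, so $\sum_k A^{j+k}_p = 0$. The main technical obstacle is the convolution identity for $B^{j+k}_p$: although classical, it requires either invoking the convolution theorem for uniform B-splines or a short separate induction via \eqref{eq:B-spline}; the remaining steps reduce to careful bookkeeping of the $h$-factors embedded in the definitions \eqref{eq:ABdefinition}.
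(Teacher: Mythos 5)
Your proposal is correct, and it is considerably more self-contained than the paper's own proof, which simply declares the symmetry and monotonicity ``obvious,'' cites H\"ollig's book for the two scalar-product identities in \eqref{eq:smrec}, and invokes partition of unity for \eqref{eq:pu}. Your derivations are the standard ones underlying that citation: the derivative formula $(\theta^j_p)'=\tfrac{1}{h}(\theta^j_{p-1}-\theta^{j+1}_{p-1})$ plus translation invariance gives the four-term expansion collapsing to $2B^{j+k}_{p-1}-B^{j+k+1}_{p-1}-B^{j+k-1}_{p-1}$ exactly as you describe, and the convolution representation of the cardinal B-spline gives $\int N_pN_p(\cdot-k)=N_{2p+1}(p+1+k)$, which is the stated point-evaluation identity; your treatment of \eqref{eq:pu} coincides with the paper's. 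The only place your sketch is thin is the monotonicity step: on the sliver $(x_{j+p/2},\,x_{j+(p+1)/2})$ of the left half of the support, the point $x$ lies past the peak of $\theta^j_{p-1}$, so the inequality $\theta^j_{p-1}(x)>\theta^j_{p-1}(x-h)$ does not follow from monotonicity and translation alone --- you must first reflect $x$ across the center of $\theta^j_{p-1}$ using its (already established) symmetry and then compare the two preimages in the increasing region. This is a one-line fix, not a gap in the approach. What your route buys is a proof that does not outsource the key recursions to an external reference; what the paper's route buys is brevity.
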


\begin{proof}
Symmetry and monotonicity are obvious. The scalar product properties in \eqref{eq:smrec} are direct results from \cite{hollig2003finite} by replacing its B-spline basis functions appropriately. The summation properties \eqref{eq:pu} are true as the B-splines satisfy the partition of unity.
\end{proof}

Invoking the definition of B-splines \eqref{eq:B-spline}, Lemma \ref{lem:bsp} implies the following recursive formula for the mass entries
\begin{equation} \label{eq:Bk2Bk-1}
\begin{aligned}
B^{j+k}_p & = \frac{(p+k+1)^2 B^{j+k+1}_{p-1} - 2(k^2 - p -p^2)B^{j+k}_{p-1} + (p-k+1)^2 B^{j+k-1}_{p-1} }{2p (2p+1) }.
%A^{j-k}_p & =  2  \theta^j_{2p-1} (j - k + p) -  \theta^j_{2p-1} (j - k - 1+ p) -  \theta^j_{2p-1} (j - k + 1 +  p), \\
%A^{j+k}_p & =  2  \theta^j_{2p-1} (j + k + p) -  \theta^j_{2p-1} (j + k - 1+ p) -  \theta^j_{2p-1} (j + k + 1 +  p). \\
\end{aligned}
\end{equation}

\begin{lemma} \label{lem:Ak}
For any positive integer $p$, there holds
\begin{equation} \label{eq:euler}
\begin{aligned}
\Big( \sum_{k=1}^p A^{j+k}_p k^2 \Big) + 1 =  0.
\end{aligned}
\end{equation}
\end{lemma}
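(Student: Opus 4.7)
The plan is to reduce the claim to the partition-of-unity identity \eqref{eq:pu} applied at degree $p-1$, by using the stiffness-to-mass recurrence from \eqref{eq:smrec} together with the symmetry in \eqref{eq:symm}.

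First I would symmetrize. Since $A^{j+k}_p = A^{j-k}_p$ by \eqref{eq:symm}, the statement \eqref{eq:euler} is equivalent to showing
\begin{equation*}
\sum_{k=-p}^{p} k^2\, A^{j+k}_p \;=\; -2,
\end{equation*}
because the $k=0$ term vanishes and the negative-$k$ part mirrors the positive one.

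Next I would substitute the identity $A^{j+k}_p = 2B^{j+k}_{p-1} - B^{j+k+1}_{p-1} - B^{j+k-1}_{p-1}$ from \eqref{eq:smrec} into this symmetric sum. A single change of index in each shifted term ($m=k+1$ in one sum, $m=k-1$ in the other), using that $B^{j+m}_{p-1}$ is supported on $|m|\le p-1$ so the boundary terms at $|k|=p$ vanish, combines the three contributions into
\begin{equation*}
\sum_{m} \bigl[2m^2 - (m-1)^2 - (m+1)^2\bigr] B^{j+m}_{p-1} \;=\; -2 \sum_{m} B^{j+m}_{p-1}.
\end{equation*}
Finally, the B-spline partition-of-unity identity from \eqref{eq:pu} applied at degree $p-1$ gives $\sum_m B^{j+m}_{p-1} = 1$, and combining with the symmetrization step yields $\sum_{k=1}^p k^2 A^{j+k}_p = -1$, which is exactly \eqref{eq:euler}.

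There is no real obstacle: the identity $2m^2-(m-1)^2-(m+1)^2=-2$ is the discrete second-difference of $m^2$, so the whole proof amounts to recognizing $A^{j+k}_p$ as (minus) a discrete Laplacian of the degree-$(p-1)$ mass entries and reading off the second moment. The only point requiring mild care is the index bookkeeping after the shifts, to confirm that extending each inner sum to $m\in\{-p,\dots,p\}$ introduces only zero terms so that the partition-of-unity sum can be applied cleanly.
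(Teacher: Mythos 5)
Your proposal is correct and follows essentially the same route as the paper: substitute the recurrence $A^{j+k}_p = 2B^{j+k}_{p-1} - B^{j+k+1}_{p-1} - B^{j+k-1}_{p-1}$ from \eqref{eq:smrec}, observe that the discrete second difference $2m^2-(m-1)^2-(m+1)^2=-2$, and finish with the partition-of-unity identity \eqref{eq:pu} at degree $p-1$. The only cosmetic difference is that you symmetrize to the two-sided sum at the outset, whereas the paper does the index bookkeeping on the one-sided sum and invokes symmetry at the end.
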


\begin{proof}
Applying \eqref{eq:symm}, \eqref{eq:0}, and the first equality of \eqref{eq:smrec}, we obtain
\begin{equation} 
\begin{aligned}
\sum_{k=1}^p A^{j+k}_p k^2 & = \sum_{k=1}^p k^2 \big( 2 B^{j+k}_{p-1} - B^{j+k+1}_{p-1}  - B^{j+k-1}_{p-1} \big) \\
& = - B_{p-1}^j - 2 B_{p-1}^{j+1} + \sum_{k=2}^{p-1} \Big( -(k-1)^2 + 2k^2 - (k+1)^2 \Big) B_{p-1}^{j+k} \\
& \quad +\Big(-(p-1)^2 + 2p^2 \Big) B_{p-1}^{j+p} - p^2 B_{p-1}^{j+p+1} \\
& = - B_{p-1}^j  - 2 \sum_{k=1}^{p-1} B_{p-1}^{j+k} \\
& = - \sum_{k=1- p}^{p-1} B^{j+k}_{p-1}.
\end{aligned}
\end{equation}

Applying \eqref{eq:pu} with $p-1$, we arrive to
\begin{equation} 
\begin{aligned}
\sum_{k=1}^p A^{j+k}_p k^2 + 1 = - \sum_{k=1- p}^{p-1} B^{j+k}_{p-1} + 1 = 0,
\end{aligned}
\end{equation}
which completes the proof.
\end{proof}

%We point out that the term $A_p^{j+k}$ is in terms of $(2p+1)!$ and for $p\to \infty$ this identity shares the feature of the Euler's identity in real number field. 

\begin{lemma} \label{lem:Bk}
For any positive integer $p$, there holds
\begin{equation} 
\begin{aligned}
p+1 - 12 \sum_{k=1}^p B^{j+k}_p k^2  =  0.
\end{aligned}
\end{equation}
\end{lemma}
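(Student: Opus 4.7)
The plan is to proceed by induction on $p$, paralleling the argument for Lemma \ref{lem:Ak} but now exploiting the recursion \eqref{eq:Bk2Bk-1} for the mass entries rather than the stiffness--mass identity. Writing $T_p := \sum_{k=1}^{p} k^2 B^{j+k}_p$, the claim becomes $T_p = (p+1)/12$. The base case $p = 1$ follows from the hat-function integrals $B^j_1 = 2/3$ and $B^{j+1}_1 = 1/6$ (readily obtained either directly or from \eqref{eq:Bk2Bk-1} started at $B^{j+k}_0 = \delta_{k,0}$), which give $T_1 = 1/6 = 2/12$.

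For the inductive step I would assume $T_{p-1} = p/12$, substitute \eqref{eq:Bk2Bk-1} into the symmetrized sum $\sum_{k=-p}^{p} k^2 B^{j+k}_p = 2T_p$, and shift the summation index by $\pm 1$ in the two off-diagonal pieces so that everything is written against the common sequence $\{B^{j+\ell}_{p-1}\}_\ell$. Using the compact-support identity \eqref{eq:0} to discard the terms with $|\ell|\ge p$, this reduces the problem to
\begin{equation*}
2p(2p+1)\cdot 2T_p \;=\; \sum_{\ell=-(p-1)}^{p-1} C(\ell)\, B^{j+\ell}_{p-1},
\end{equation*}
where $C(\ell) = (\ell-1)^2(p+\ell)^2 - 2\ell^2(\ell^2 - p - p^2) + (\ell+1)^2(p-\ell)^2$ collects the three contributions produced by the recursion.

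The remaining step, and the main obstacle, is the polynomial simplification of $C(\ell)$. Observe that $C$ is already even in $\ell$, so the quartic and cubic contributions cancel between the two symmetric outer pieces and the central piece; a short computation then yields $C(\ell) = 2(2p-1)(p-1)\,\ell^2 + 2p^2$. Substituting back, using the partition-of-unity identity $\sum_\ell B^{j+\ell}_{p-1} = 1$ from Lemma \ref{lem:bsp} together with the inductive hypothesis $\sum_\ell \ell^2 B^{j+\ell}_{p-1} = 2T_{p-1} = p/6$, I obtain
\begin{equation*}
2p(2p+1)\,T_p \;=\; (2p-1)(p-1)\cdot\tfrac{p}{6} + p^2 \;=\; \tfrac{p(p+1)(2p+1)}{6},
\end{equation*}
so that $T_p = (p+1)/12$, completing the induction. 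The only real work is the polynomial identity for $C(\ell)$, which is routine but tedious; every other ingredient mirrors the proof of Lemma \ref{lem:Ak}.
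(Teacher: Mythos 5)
Your proof is correct and follows essentially the same route as the paper: induction on $p$, with the inductive step driven by substituting the recursion \eqref{eq:Bk2Bk-1}, invoking the partition of unity, and applying the inductive hypothesis (your explicit simplification $C(\ell)=2(2p-1)(p-1)\ell^2+2p^2$ is exactly the coefficient $(2-6q+4q^2)$ plus the constant term that the paper absorbs via \eqref{eq:pu}). The only cosmetic difference is that you symmetrize the sum over $k=-p,\dots,p$ while the paper works directly with $k=1,\dots,p$.
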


\begin{proof}
We prove this by induction on $p$. Firstly, for $p=1$, we have 
\begin{equation*} 
\begin{aligned}
p+1 - 12 \sum_{k=1}^p B^{j+k}_p k^2  =  1+1 - 12 \times 1^2 \times \frac{1}{6} = 0.
\end{aligned}
\end{equation*}
Now, suppose it is true for $p=q-1$. Then for $p=q$, invoking \eqref{eq:symm}, \eqref{eq:0},  and \eqref{eq:Bk2Bk-1} gives
\begin{equation*} 
\begin{aligned}
p+1 - 12 \sum_{k=1}^p B^{j+k}_p k^2  & =  q +1 - 12 \sum_{k=1}^q B^{j+k}_q k^2 \\
& = q +1 - \frac{12}{2q(2q+1)}  \Big( q^2 + (2-6q+4q^2) \sum_{k=1}^{q-1} k^2 B^{j+k}_{q-1}  \Big) \\
& = q +1 - \frac{12}{2q(2q+1)}  \Big( q^2 + (2-6q+4q^2) \frac{q}{12}  \Big) \\
& = 0,
\end{aligned}
\end{equation*}
which completes the proof.
\end{proof}

In order to proceed with the next Lemma, we define
\begin{equation} \label{eq:defineFG}
\begin{aligned}
F^0_{p,m} & = -2p(2p+1) + 2m (2m-1) p^2, \\
G^0_{p,m,k} & = 2p(2p+1) \Big( 2k^{2m} - (k+1)^{2m} - (k-1)^{2m} \Big) \\
& \quad + 2m (2m-1)  \Big( (k-1)^{2m-2} (p+k)^2 - 2k^{2m-2} (k^2- p - p^2) \\
& \qquad \qquad \qquad \qquad  + (k+1)^{2m-2} (p-k)^2 \Big), \\
% \frac{1}{(2p-2q) (2p-2q+1)}
\text{and for} \ q &= 1,2, \cdots, p-2, \\
F^q_{p,m} & =  2(p-q+1)(p-q) F^{q-1}_{p,m} + (p-q)^2G^{q-1}_{p,m,k}, \\ %\qquad q = 1,2, \cdots, p-2, \\
G^{q}_{p,m,k} & = (p-q+k)^2 G^{q-1}_{p,m,k-1} - 2 \big( k^2 - (p-q+1)(p-q) \big) G^{q-1}_{p,m,k} \\
& \quad + (p-q-k)^2 G^{q-1}_{p,m,k+1}. \\ %\qquad q = 1,2, \cdots, p-2.\\
\end{aligned}
\end{equation}

Now, we postulate the following on these terms.
\begin{postulate} \label{lem:FkGk}
For any positive integer $p>1$ and $m=2,3,\cdots, p$, there holds
\begin{equation} 
\begin{aligned}
2 F^{q}_{p+1,m} - G^{q}_{p+1,m,0} & = 0, \qquad q = 1,2, \cdots, p-2, \\
4 F^{p-2}_{p,m} + G^{p-2}_{p,m,1} & = 0.
\end{aligned}
\end{equation}
\end{postulate}

\begin{proof}
These two identities are in terms of integers. These statements are verified for various numbers using  \textbf{Mathematica} \cite{mathematica}. In our case, we verified these statements up to the largest numbers $p=m=17.$ The first identity can be generalized for any $q$.
\end{proof}

\begin{lemma} \label{lem:AkBk}
For any positive integer $p>1$ and $m=2,3,\cdots, p$, there holds
\begin{equation} \label{eq:AkBk}
\sum_{k=1}^p \Big( \frac{k^{2m}}{(2m)!} A_p^{j+k}  + \frac{k^{2m-2}}{(2m-2)!} B_p^{j+k} \Big) = 0.
\end{equation}
\end{lemma}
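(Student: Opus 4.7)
The plan is to prove this identity by iteratively applying the two recursions from Lemma~\ref{lem:bsp} and equation~(\ref{eq:Bk2Bk-1}) to reduce the sum to lower-degree B-spline mass entries, and then to close the argument using Postulate~\ref{lem:FkGk}. Let $S_{p,m}$ denote the sum on the left of (\ref{eq:AkBk}).

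The first step is to symmetrize using $A^{j+k}_p = A^{j-k}_p$ and $B^{j+k}_p = B^{j-k}_p$ from (\ref{eq:symm}), noting $0^{2m-2}=0$ for $m\ge 2$, so that $S_{p,m} = \tfrac12\sum_{k=-p}^p(\cdots)$. Multiplying through by $2p(2p+1)(2m)!$ and substituting the stiffness-to-mass recursion $A^{j+k}_p = 2B^{j+k}_{p-1} - B^{j+k+1}_{p-1} - B^{j+k-1}_{p-1}$ from (\ref{eq:smrec}) together with the mass recursion (\ref{eq:Bk2Bk-1}), a direct re-indexing shows that the coefficient of $B_{p-1}^{j+\ell}$ equals exactly $G^0_{p,m,\ell}$ as defined in (\ref{eq:defineFG}), and a direct computation confirms $G^0_{p,m,0} = 2F^0_{p,m}$.

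Next, I would prove by induction on $q\ge 0$ that, after $q$ further applications of the mass recursion (each introducing a factor $2(p-q-1)(2p-2q-1)$), the identity takes the form
\begin{equation*}
C_q \cdot S_{p,m} \;=\; \sum_{\ell=-(p-q-1)}^{p-q-1} G^q_{p,m,\ell}\, B^{j+\ell}_{p-q-1},
\end{equation*}
where the coefficients obey the recurrences in (\ref{eq:defineFG}) and satisfy $G^q_{p,m,0} = 2F^q_{p,m}$. The inductive step is a direct expansion: inserting (\ref{eq:Bk2Bk-1}) into each $B^{j+\ell}_{p-q-1}$, re-indexing by the new offset $\ell'$, and collecting terms yields exactly the recurrence defining $G^{q+1}_{p,m,\ell'}$, while the preservation of the centre relation $G^{q+1}_{p,m,0} = 2F^{q+1}_{p,m}$ through the iteration is the content of the first part of Postulate~\ref{lem:FkGk}. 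At the termination stage $q=p-2$ only $\ell\in\{-1,0,1\}$ remains, because $B_1^{j+k}$ vanishes for $|k|\ge 2$; combining $B_1^j = 4B_1^{j+1}$ (which follows from Lemmas~\ref{lem:bsp} and~\ref{lem:Bk} at $p=1$) with the symmetry $G^{p-2}_{p,m,-1} = G^{p-2}_{p,m,1}$, the residue simplifies to a nonzero multiple of $4F^{p-2}_{p,m} + G^{p-2}_{p,m,1}$, which vanishes by the second part of Postulate~\ref{lem:FkGk}, delivering $S_{p,m}=0$.

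The main obstacle is twofold: first, the coefficient bookkeeping in the inductive step, which requires careful tracking of the shifts $\ell \mapsto \ell\pm 1$ and of the boundary terms where $|\ell|$ exceeds the support of the reduced B-spline; and, more substantively, the honest reliance on Postulate~\ref{lem:FkGk}, which the paper itself verifies only computationally. A fully analytic argument would either require a separate derivation of those two polynomial identities in $p,m$ or an entirely different route—for instance, observing that (\ref{eq:AkBk}) is equivalent to the Fourier-side statement that the discrete symbols $\hat B_p(\xi) := \sum_k B_p^{j+k} e^{ik\xi}$ and $\hat A_p(\xi) = 4\sin^2(\xi/2)\hat B_{p-1}(\xi)$ agree with the continuous B-spline Poisson sum through order $\xi^{2p-2}$, which follows from the product formula $|\hat\theta_p(\xi)|^2 = (\sin(\xi/2)/(\xi/2))^{2p+2}$ combined with Poisson summation.
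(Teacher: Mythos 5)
Your proposal follows essentially the same route as the paper's proof: reduce the sum to lower-degree mass entries via the recursions \eqref{eq:smrec} and \eqref{eq:Bk2Bk-1}, track the resulting coefficients through the $F^q_{p,m}$ and $G^q_{p,m,k}$ recurrences of \eqref{eq:defineFG}, and close at degree one using $B_1^j=\tfrac23$, $B_1^{j+1}=\tfrac16$ together with Postulate~\ref{lem:FkGk}. Your write-up is somewhat more explicit than the paper's about where each part of the Postulate enters (the centre relation $G^{q}_{p,m,0}=2F^{q}_{p,m}$ during the iteration, and $4F^{p-2}_{p,m}+G^{p-2}_{p,m,1}=0$ at termination) and correctly flags that the Postulate is only verified computationally, but the argument is the same.
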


\begin{proof}
We prove this by induction on $p$ and $m$. Denote the left-hand-side term in \eqref{eq:AkBk} as $T_{p,m}$ such that 
\begin{equation*} 
\begin{aligned}
T_{p,m}  & =  T_{p,m}^A + T_{p,m-1}^B,
\end{aligned}
\end{equation*}
where
\begin{equation*} 
\begin{aligned}
T_{p,q}^A = \sum_{k=1}^p \frac{k^{2q}}{(2q)!} A_p^{j+k}, \qquad T_{p,q}^B = \sum_{k=1}^p \frac{k^{2q}}{(2q)!} B_p^{j+k}.
\end{aligned}
\end{equation*}
Then using \eqref{eq:0},  \eqref{eq:smrec}, and \eqref{eq:Bk2Bk-1} gives 
\begin{equation*} 
\begin{aligned}
T_{p,m}^A  & =  \frac{1}{(2m)!} \Big( -B_{p-1}^j + \sum_{k=1}^{p-1} \big( -(k-1)^{2m} + 2k^{2m} - (k+1)^{2m} \big) B_{p-1}^{j+k} \Big), \\
T_{p,m}^B  & =  \frac{1}{(2m)!(2p)(2p+1)} \Big( p^2 B_{p-1}^j \\
& \quad + \sum_{k=1}^{p-1} \big( (k-1)^{2m} (p+k)^2 - 2k^{2m}(k^2-p-p^2) + (k+1)^{2m} (p-k)^2 \big) B_{p-1}^{j+k} \Big).
\end{aligned}
\end{equation*}
Therefore, using the notation of \eqref{eq:defineFG} implies
\begin{equation*} 
\begin{aligned}
T_{p,m}  & =  T_{p,m}^A + T_{p,m-1}^B = \frac{1}{(2m)!(2p)(2p+1)} \Big( F^0_{p,m} B_{p-1}^j + \sum_{k=1}^{p-1} G^0_{p,m,k} B_{p-1}^{j+k}  \Big).
\end{aligned}
\end{equation*}

Using Postulate \ref{lem:FkGk} and applying \eqref{eq:symm}, \eqref{eq:0},  \eqref{eq:smrec}, and \eqref{eq:Bk2Bk-1} recursively, we obtain  
\begin{equation*} 
\begin{aligned}
T_{p,m}  & =  \frac{1}{(2m)!(2p)(2p+1)} \Big( F^0_{p,m} B_{p-1}^j + \sum_{k=1}^{p-1} G^0_{p,m,k} B_{p-1}^{j+k}  \Big) \\
& =  \frac{1}{(2m)!(2p)(2p+1)(2p-2)(2p-1)} \Big( F^1_{p,m} B_{p-1}^j + \sum_{k=1}^{p-2} G^1_{p,m,k} B_{p-1}^{j+k}  \Big) \\
& = \cdots \\
& = \frac{3!}{(2m)!(2p)!} \big( F^{p-2}_{p,m} B_1^j + G^{p-2}_{p,m,1} B_1^{j+1}  \big) \\
& = \frac{3!}{(2m)!(2p)!} \big( F^{p-2}_{p,m} \cdot \frac{2}{3} + G^{p-2}_{p,m,1} \cdot \frac{1}{6} \big) \\
& = \frac{1}{(2m)!(2p)!} \big( 4 F^{p-2}_{p,m} + G^{p-2}_{p,m,1} \big) \\
& = 0,
\end{aligned}
\end{equation*}
which completes the proof.
\end{proof}

\begin{remark}
For the particular case $m=2$, invoking \eqref{eq:pu} and Lemma \ref{lem:Bk} yields
\begin{equation*} 
\begin{aligned}
T_{p,2}  & =   \frac{2(-1+ 4p)}{4!(2p)(2p+1)}   \Big( p B_{p-1}^j - 2 \sum_{k=1}^{p-1} \big( 6k^2-p \big) B_{p-1}^{j+k} \Big) \\
& =   \frac{2(-1+ 4p)}{4!(2p)(2p+1)}   \Big( p - 12 \sum_{k=1}^{p-1} k^2 B_{p-1}^k \Big) \\
& = 0.
\end{aligned}
\end{equation*}
\end{remark}

\begin{lemma} \label{lem:C2m}
Let $C_2 = 1$ and for $m=2, 3, \cdots$, define
\begin{equation} \label{eq:c2m}
C_{2m} = \sum_{k=1}^p \frac{(-1)^m k^{2m}}{(2m)!} A_p^{j+k}  - \sum_{q=1}^{m-1}  \sum_{k=1}^p C_{2m-2q}\frac{(-1)^q k^{2q}}{(2q)!} B_p^{j+k}.
\end{equation}

For any positive integer $p\ge 2$ and $m=2,3,\cdots, p$, there holds
\begin{equation}
\begin{aligned}
C_{2m} & = 0.
\end{aligned}
\end{equation}

\end{lemma}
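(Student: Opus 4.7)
The plan is to prove $C_{2m}=0$ by strong induction on $m$, using Lemma \ref{lem:AkBk} as the workhorse. The base case $m=2$ is immediate: the second sum in the definition of $C_4$ reduces to the single term with $q=1$, $C_2=1$, so $C_4 = \sum_{k=1}^p\bigl(\tfrac{k^4}{4!}A_p^{j+k} + \tfrac{k^2}{2!}B_p^{j+k}\bigr)$, which vanishes by Lemma \ref{lem:AkBk} with $m=2$.

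For the inductive step, suppose $C_4 = C_6 = \cdots = C_{2m-2} = 0$ for some $m$ with $3 \le m \le p$. Then in the defining sum
\[
C_{2m} = \sum_{k=1}^p \frac{(-1)^m k^{2m}}{(2m)!} A_p^{j+k}  - \sum_{q=1}^{m-1}  \sum_{k=1}^p C_{2m-2q}\frac{(-1)^q k^{2q}}{(2q)!} B_p^{j+k},
\]
every coefficient $C_{2m-2q}$ vanishes except when $2m-2q = 2$, i.e.\ $q=m-1$, where $C_2=1$. Thus the expression collapses to
\[
C_{2m} = (-1)^m \sum_{k=1}^p \frac{k^{2m}}{(2m)!} A_p^{j+k} + (-1)^m \sum_{k=1}^p \frac{k^{2m-2}}{(2m-2)!} B_p^{j+k} = (-1)^m \sum_{k=1}^p \Bigl( \frac{k^{2m}}{(2m)!} A_p^{j+k} + \frac{k^{2m-2}}{(2m-2)!} B_p^{j+k} \Bigr),
\]
which is zero by Lemma \ref{lem:AkBk} applied at the current value of $m$ (note the hypothesis $m \le p$ is exactly what is needed for Lemma \ref{lem:AkBk} to apply).

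There is essentially no obstacle here: the recursion defining $C_{2m}$ is structured so that the induction hypothesis erases all but one term of the $q$-sum, and what remains is precisely the identity in Lemma \ref{lem:AkBk}. The only subtlety worth mentioning is that the admissible range $m = 2, \ldots, p$ in the statement matches the range of validity of Lemma \ref{lem:AkBk}, so the inductive step never appeals to an unproven instance.
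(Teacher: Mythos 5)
Your proof is correct and takes essentially the same route as the paper's: strong induction on $m$ with base case $m=2$, using the inductive hypothesis to collapse the $q$-sum to the single surviving term $q=m-1$ (where $C_2=1$), after which the expression is exactly the identity of Lemma \ref{lem:AkBk}. The sign bookkeeping and the observation that the admissible range $m\le p$ matches the range of validity of Lemma \ref{lem:AkBk} are both handled as in the paper.
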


\begin{proof}
We prove this by induction on $m$ for $m=2,3, \cdots, p$. Firstly, for $m=2$, using $C_2=1$ and Lemma \ref{lem:AkBk}, \eqref{eq:c2m} reduces to 
\begin{equation}
C_{4} = \sum_{k=1}^p \Big( \frac{k^{4}}{4!} A_p^{j+k}  + \frac{k^{2}}{2!} B_p^{j+k} \Big) = 0.
\end{equation}
Now, assume that $C_{2m} = 0$, for $m=2,3,\cdots, s,$ where $s<p$. Then using $C_2=1$, \eqref{eq:c2m} with $m=s+1$ reduces to 
\begin{equation}
C_{2s+2} = \sum_{k=1}^p \frac{(-1)^{s+1} k^{2s+2}}{(2s+2)!} A_p^{j+k}  - \sum_{k=1}^p \frac{(-1)^s k^{2s}}{(2s)!} B_p^{j+k}.
\end{equation}
By Lemma \ref{lem:AkBk}, $C_{2s+2} = 0$ for $s=2,3,\cdots, p-1$. This completes the proof.
\end{proof}

\begin{lemma} \label{lem:aoverb}
Denote $\Lambda = \mu h.$ For any positive integer $p$, there holds
\begin{equation}
\begin{aligned}
\frac{A_p U_p }{B_p U_p} = \Lambda^2 + 2 (-1)^{p+1} \Big( \sum_{k=1}^p \frac{k^{2p+2}}{(2p+2)!} A^{j+k}_p  + \frac{k^{2p}}{(2p)!} B^{j+k}_p  \Big) \Lambda^{2p+2} + \mathcal{O}(\Lambda^{2p+4}).
\end{aligned}
\end{equation}
\end{lemma}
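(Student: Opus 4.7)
The plan is to Taylor-expand the numerator and denominator of the ratio in $\Lambda=\mu h$ and then perform a formal power-series division, using the B-spline identities from Section~\ref{sec:bspline} to collapse the intermediate coefficients.

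Starting from \eqref{eq:abeh} and substituting $\cos(k\Lambda)=\sum_{m\geq 0}(-1)^m(k\Lambda)^{2m}/(2m)!$, the factor $e^{ij\Lambda}$ cancels in the quotient and one obtains
\begin{equation*}
A_pU_p = e^{ij\Lambda}\Big[\sum_{k=-p}^{p}A_p^{j+k} + 2\sum_{m\geq 1}(-1)^m\alpha_m\Lambda^{2m}\Big], \qquad \alpha_m := \sum_{k=1}^{p}\frac{k^{2m}}{(2m)!}A_p^{j+k},
\end{equation*}
together with the analogous expression for $B_pU_p$ using $\beta_m := \sum_{k=1}^{p}k^{2m}B_p^{j+k}/(2m)!$. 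The partition of unity \eqref{eq:pu} kills the constant term of the numerator and leaves $1$ in the denominator, while Lemma~\ref{lem:Ak} provides $\alpha_1=-1/2$.

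I would then write the quotient as a formal even series $\sum_{m\geq 1}c_m\Lambda^{2m}$. Cross-multiplying by $B_pU_p$ and matching coefficients of $\Lambda^{2m}$ yields the recursion
\begin{equation*}
c_m = 2(-1)^m\alpha_m - 2\sum_{q=1}^{m-1}(-1)^q\beta_q\,c_{m-q}, \qquad m\geq 1,
\end{equation*}
with $c_0=0$. The base case $m=1$ gives $c_1=-2\alpha_1=1$, which is the leading $\Lambda^2$ term. A strong induction then shows $c_m=0$ for $m=2,\ldots,p$: under the hypothesis $c_2=\cdots=c_{m-1}=0$, only the $q=m-1$ term in the convolution survives, and the recursion collapses to $c_m=2(-1)^m(\alpha_m+\beta_{m-1})$. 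This is exactly $2C_{2m}$ evaluated under the same vanishing hypothesis, and it equals zero by Lemma~\ref{lem:C2m} (equivalently, by a direct application of Lemma~\ref{lem:AkBk}).

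The same cancellation at $m=p+1$ leaves only the $q=p$ contribution in the sum, so
\begin{equation*}
c_{p+1} = 2(-1)^{p+1}\alpha_{p+1} - 2(-1)^{p}\beta_p\cdot c_1 = 2(-1)^{p+1}\bigl(\alpha_{p+1}+\beta_p\bigr),
\end{equation*}
which reproduces exactly the $\Lambda^{2p+2}$ coefficient in the statement; all higher-order contributions are absorbed into $\mathcal{O}(\Lambda^{2p+4})$. The main obstacle is the sign and factor-of-two bookkeeping through the formal division—the $2\cos(k\Lambda)$ expansion, the alternating signs, and the convolution structure must line up so that the reduced recursion matches that for $C_{2m}$ (or so that Lemma~\ref{lem:AkBk} applies directly). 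Once this alignment is made, the conclusion is immediate.
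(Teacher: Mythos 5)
Your proposal is correct and follows essentially the same route as the paper: expand numerator and denominator via the cosine Taylor series, divide formally, and match coefficients, with \eqref{eq:pu} and Lemma~\ref{lem:Ak} giving $c_0=0$, $c_1$ (the $\Lambda^2$ coefficient) $=1$, and Lemma~\ref{lem:AkBk} (equivalently Lemma~\ref{lem:C2m}, which is the paper's packaging of the same collapsed recursion) killing $c_m$ for $m=2,\dots,p$. Your explicit convolution recursion and the observation that only the $q=m-1$ term survives under the induction hypothesis is just a slightly more transparent rendering of the last line of \eqref{eq:cs}, so no substantive difference remains.
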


\begin{proof}
Assume that 
\begin{equation} \label{eq:abexpansion}
\begin{aligned}
\frac{A_p U_p }{B_p U_p} =  c_0 + c_1\Lambda + c_2 \Lambda^2 + \cdots,
\end{aligned}
\end{equation}

Applying \eqref{eq:abeh} gives
\begin{equation}
\begin{aligned}
\frac{A_p U_p }{B_p U_p} = \frac{A^j_p + 2\sum_{k=1}^p A^{j+k}_p \cos(k\Lambda) }{B^j_p + 2\sum_{k=1}^p B^{j+k}_p \cos(k\Lambda)} = c_0 + c_1\Lambda + c_2 \Lambda^2 + \cdots,
\end{aligned}
\end{equation}
which we express as
\begin{equation}
\begin{aligned}
A^j_p + 2\sum_{k=1}^p A^{j+k}_p \cos(k\Lambda) = \Big( B^j_p + 2\sum_{k=1}^p B^{j+k}_p \cos(k\Lambda) \Big) ( c_0 + c_1\Lambda + c_2 \Lambda^2 + \cdots ).
\end{aligned}
\end{equation}
Expanding $\cos(k \Lambda)$ around $\Lambda = 0$, we obtain 
\begin{equation}
\begin{aligned}
 \cos(k\Lambda) = \sum_{m=0}^\infty \frac{(-1)^m}{(2m)!} (k \Lambda)^{2m} = 1 - \frac{(k\Lambda)^2}{2!} + \cdots,
\end{aligned}
\end{equation}
and thus,
\begin{equation}
\begin{aligned}
A^j_p + 2\sum_{k=1}^p A^{j+k}_p \Big( \sum_{m=0}^\infty \frac{(-1)^m}{(2m)!} (k \Lambda)^{2m} \Big) = & \Big( B^j_p + 2\sum_{k=1}^p B^{j+k}_p \sum_{m=0}^\infty \frac{(-1)^m}{(2m)!} (k \Lambda)^{2m}  \Big) \\
& \cdot ( c_0 + c_1\Lambda + c_2 \Lambda^2 + \cdots ).
\end{aligned}
\end{equation}

Setting up equalities on the coefficients of the terms with the same powers of $k\Lambda$ and using the expression of symmetry \eqref{eq:symm}, one obtains
\begin{equation} \label{eq:cs}
\begin{aligned}
c_{2q+1} & = 0, \forall \ q=0, 1,2,\cdots, \\
\sum_{k=-p}^p A^{j+k}_p & = c_0 \sum_{k=-p}^p B^{j+k}_p, \\
2 \sum_{k=1}^p -\frac{k^2}{2!} A^{j+k}_p & = 2c_0 \sum_{k=1}^p -\frac{k^2}{2!} B^{j+k}_p + c_2 \sum_{k=-p}^p B^{j+k}_p, \\
2 \sum_{k=1}^p  \frac{(-1)^m k^{2m}}{(2m)!} A^{j+k}_p & = c_{2m} \sum_{k=-p}^p B^{j+k}_p + 2 \sum_{q=1}^{m}  \sum_{k=1}^p c_{2m-2q} \frac{(-1)^q k^{2q}}{(2q)!} B_p^{j+k}, \\
\end{aligned}
\end{equation}
where $m=2, 3, \cdots.$ Using \eqref{eq:pu} and Lemma \ref{lem:Ak} yields $c_0 = 0$ and  $c_2 = 1,$ respectively.
By a factor of 2, Lemma \ref{lem:C2m} immediately implies that $c_{2m} = 0$ for $m=2, 3,\cdots, p.$ Setting $m=p+1$ in the last equation in \eqref{eq:cs}, one obtains
\begin{equation}
\begin{aligned}
c_{2p+2} = 2 \sum_{k=1}^p  \frac{(-1)^{p+1} k^{2p+2}}{(2p+2)!} A^{j+k}_p - 2 \sum_{k=1}^p  \frac{(-1)^{p} k^{2p}}{(2p)!} B^{j+k}_p,
\end{aligned}
\end{equation}
which is substituted back to \eqref{eq:abexpansion} to complete the proof.
\end{proof}

\subsection{Dispersion error equation}
In this subsection, we assume that both the stiffness and the mass matrix entries are integrated exactly and the B-splines of degree $p$ are $C^{p-1}$ and defined on a uniform grid with $0<h<1.$ Now we present the main theorem.

\begin{theorem} \label{thm:eigenexpansion}
For each discrete mode $\omega_h$, there holds the discrete dispersion error
\begin{equation} \label{eq:disperr}
\omega^2_h - \mu^2 = 2 (-1)^{p+1} \Big( \sum_{k=1}^p \frac{k^{2p+2}}{(2p+2)!} A^{j+k}_p  + \frac{k^{2p}}{(2p)!} B^{j+k}_p  \Big) \mu^{2p+2} h^{2p} + \mathcal{O}(h^{2p+2}).
\end{equation}
\end{theorem}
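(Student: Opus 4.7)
The proof plan is to read the theorem off from Lemma~\ref{lem:aoverb} after substituting the Bloch wave ansatz into the discrete Helmholtz equation; most of the heavy lifting has already been done in the preceding lemmas, so the argument at this stage is essentially algebraic bookkeeping.

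First I would start from the third equation in \eqref{eq:Hvf}, specialized to the test function $w^h = \theta^j_p$ with $a_h = a$ and $b_h = b$ (we are under the exact-integration assumption of this subsection). This yields
\begin{equation*}
a(\theta^j_p, u^h) - \omega_h^2 \, b(\theta^j_p, u^h) = 0.
\end{equation*}
Plugging in the expansion $u^h = \sum_k U^k_p \theta^k_p$, using the compact support relation in \eqref{eq:ABdefinition} and invoking the formulas for $a(\theta^j_p, u^h) = A_p U_p / h$ and $b(\theta^j_p, u^h) = B_p U_p \, h$ from the setup before Lemma~\ref{lem:bsp}, we obtain
\begin{equation*}
\frac{A_p U_p}{h} - \omega_h^2 \, B_p U_p \, h = 0,
\qquad\text{hence}\qquad
\omega_h^2 \, h^2 = \frac{A_p U_p}{B_p U_p}.
\end{equation*}
Here $U_p$ is the Bloch-wave vector $U^k_p = e^{ik\mu h}$, which is nontrivial, so that $B_p U_p \neq 0$ for small enough $\Lambda = \mu h$ (this is justified by Lemma~\ref{lem:bsp}, since the zeroth-order term in the expansion of $B_p U_p$ around $\Lambda = 0$ equals $\sum_{k=-p}^{p} B_p^{j+k} = 1 \neq 0$).

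Next I would apply Lemma~\ref{lem:aoverb} directly to the right-hand side. Substituting $\Lambda = \mu h$ gives
\begin{equation*}
\omega_h^2 \, h^2 = \mu^2 h^2 + 2(-1)^{p+1}\Big(\sum_{k=1}^{p} \frac{k^{2p+2}}{(2p+2)!} A^{j+k}_p + \frac{k^{2p}}{(2p)!} B^{j+k}_p\Big) \mu^{2p+2} h^{2p+2} + \mathcal{O}(h^{2p+4}).
\end{equation*}
Dividing through by $h^2$ (legal since $h>0$) yields exactly \eqref{eq:disperr}, completing the derivation.

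The only substantive step that is not pure bookkeeping is the passage from Lemma~\ref{lem:aoverb} to the theorem, and that passage is quite short; the true technical obstacle was concentrated in Lemma~\ref{lem:aoverb} itself (and in the vanishing of the coefficients $c_0 - 1, c_2 - 1, c_4, \ldots, c_{2p}$ via Lemmas~\ref{lem:Ak}--\ref{lem:C2m}), which has already been established. A small caveat worth noting in the write-up is that the expansion \eqref{eq:disperr} is formal in $h$, valid in the regime $\Lambda = \mu h \ll 1$; the conclusion is understood to hold for each fixed discrete mode $\omega_h$ in the asymptotic sense $h \to 0$, which is consistent with the duality framework of \cite{hughes2008duality} invoked at the start of the section.
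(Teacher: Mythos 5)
Your argument is exactly the paper's: form the Rayleigh quotient $\omega_h^2 = a(\theta^j_p,u^h)/b(\theta^j_p,u^h) = A_pU_p/(B_pU_p\,h^2)$ from \eqref{eq:Hvf} with the Bloch ansatz, then apply Lemma~\ref{lem:aoverb} with $\Lambda = \mu h$. Your added remarks on the nonvanishing of $B_pU_p$ via the partition of unity and on the asymptotic regime $\mu h \ll 1$ are sensible refinements of the same proof, not a different route.
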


\begin{proof}
In view of the dispersion analysis, using \eqref{eq:Hvf} with $v_h = \theta_p^j$ yields 
\begin{equation}
\omega_h^2 = \frac{a(\theta^j_p, u^h)}{b(\theta^j_p, u^h)} = \frac{A_p U_p /h }{B_p U_p h} = \frac{A_p U_p }{B_p U_p h^2 },
\end{equation}
which is known as the Rayleigh quotient. Applying Lemma \ref{lem:aoverb} and substituting $\Lambda = \mu h$ completes the proof.
\end{proof}

In the view of the duality principle, we have the following.
\begin{corollary}
For each eigenvalue, there holds
\begin{equation} \label{eq:eigenerr}
\lambda^h - \lambda = 2 (-1)^{p+1} \Big( \sum_{k=1}^p \frac{k^{2p+2}}{(2p+2)!} A^{j+k}_p  + \frac{k^{2p}}{(2p)!} B^{j+k}_p  \Big) \mu^{2p+2} h^{2p} + \mathcal{O}(h^{2p+2}).
\end{equation}
\end{corollary}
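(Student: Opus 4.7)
The plan is to obtain the corollary as a direct consequence of Theorem~\ref{thm:eigenexpansion} via the duality principle of \cite{hughes2008duality}, which is already invoked in the opening paragraph of this section. The duality principle asserts that the discrete spectrum of the Laplace eigenvalue problem \eqref{eq:pde} and the discrete dispersion relation of the Helmholtz problem \eqref{eq:pdee} are in bijective correspondence on a uniform mesh, with the exact eigenvalue $\lambda$ identified with $\mu^2$ and the discrete eigenvalue $\lambda^h$ identified with $\omega_h^2$. Under this identification, the eigenvalue error $\lambda^h - \lambda$ coincides with $\omega_h^2 - \mu^2$ term-by-term in its asymptotic expansion in $h$, up to the order already controlled by the remainder.

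Concretely, I would proceed as follows. First, invoke the Bloch wave representation \eqref{eq:bloch} in the interior of $\Omega$ so that the discrete Helmholtz equation \eqref{eq:Hvf} and the matrix eigenvalue problem \eqref{eq:mevp} give rise to the same scalar relation $\mathbf{K}\mathbf{U} = \lambda^h \mathbf{M}\mathbf{U}$ with $\mathbf{U}$ determined by the wave ansatz. Testing this relation against $\theta_p^j$ yields the Rayleigh quotient
\begin{equation*}
\lambda^h = \frac{a(\theta_p^j, u^h)}{b(\theta_p^j, u^h)} = \frac{A_p U_p}{B_p U_p\, h^2},
\end{equation*}
which is the exact same expression appearing in the proof of Theorem~\ref{thm:eigenexpansion} for $\omega_h^2$. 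Applying Lemma~\ref{lem:aoverb} with $\Lambda = \mu h$ and noting that $\lambda = \mu^2$ then yields the stated expansion by subtraction.

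The only non-trivial point in this reduction is the identification $\lambda = \mu^2$, which requires that $\mu$ in the Bloch ansatz correspond to an admissible eigen-frequency compatible with the boundary conditions of \eqref{eq:pde}. This is precisely the content of the duality principle: for each interior eigenmode, the frequency parameter $\mu h$ that resolves the discrete equation on a uniform grid can be matched with an exact eigenvalue via the asymptotic pairing in \cite{hughes2008duality}. The boundary contributions affect lower-order terms in $h$ only and are absorbed into the $\mathcal{O}(h^{2p+2})$ remainder, so the leading-order coefficient is unchanged. With this identification in hand, the corollary is immediate from Theorem~\ref{thm:eigenexpansion}. The main (though standard) obstacle is therefore simply articulating the duality correspondence cleanly; the algebraic content has already been exhausted in Lemmas~\ref{lem:Ak}--\ref{lem:aoverb}.
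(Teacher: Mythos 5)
Your proposal is correct and follows essentially the same route as the paper: the corollary is obtained from Theorem~\ref{thm:eigenexpansion} purely by the duality principle of \cite{hughes2008duality}, identifying $\lambda^h$ with $\omega_h^2$ and $\lambda$ with $\mu^2$, with the Rayleigh-quotient computation and Lemma~\ref{lem:aoverb} already carrying the algebraic weight. The paper in fact offers no separate proof beyond the phrase ``in the view of the duality principle,'' so your write-up simply makes explicit what the paper leaves implicit.
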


\begin{remark} \label{remk:2p}
This result validates that $| \lambda^h - \lambda | < Ch^{2p}.$ We can state this more explicitly with respect to $\mu$, thus the relative eigenvalue error $| \lambda^h - \lambda | / \lambda  < C(\mu h)^{2p}.$ Thus, the $2p$ order is also with respect to $\mu$, even though $\mu$ can be a large number. When $\mu$ is large, one requires $h$ to be sufficiently small for the bound on the error to be relevant. In other words, for the approximation to be relevant, we require that the product $\mu h$ remains strictly bounded.
\end{remark}

\section{Dispersion-minimized mass} \label{sec:dmm}
Section \ref{sec:disperr} establishes the optimal convergence order $2p$ for the dispersion error in 1D. The key identity for the analysis is the identity \eqref{eq:AkBk}, which limits the convergence order. To achieve a higher order of convergence we require that the identity \eqref{eq:AkBk} is to be satisfied for an increasing number of degrees of freedom $m$ beyond than $m=2,\cdots, p$. To establish the identity \eqref{eq:AkBk} for more values of $m$, we consider appropriate approximations of $A_p^{j+k}$ and $B_p^{j+k}$.  

From the view of Strang's lemma \cite{ciarlet1978finite}, for finite or isogeometric elements of \eqref{eq:pde} (with constant diffusion coefficient) in 1D on a uniform mesh, the stiffness matrix entries need to be exactly integrated by the quadrature rules which at least integrate exactly polynomials up to order $2p-2$. Since we consider \eqref{eq:pde} where the diffusion coefficient is a constant, the stiffness entries correspond to the integration of products of polynomials of order up to $2p-2$. Thus, the values of $A_p^{j+k}$ should remain the same. Therefore, we only consider approximations of the mass entries. This  motivates us to introduce the following dispersion-minimized mass.

\subsection{Local row-wise problem}
We first introduce the following linear system
\begin{equation}
\sum_{k=1}^n \frac{k^{2m}}{(2m)!} \alpha_k = \beta_m, \qquad m=1,2,\cdots, n,
\end{equation}
where $\alpha = (\alpha_1, \alpha_2, \cdots, \alpha_n)$ is a vector containing the unknowns and $\beta = (\beta_1, \beta_2, \cdots, \beta_n)$ is a given vector. We write this system in a matrix-vector form 
\begin{equation} \label{eq:dmmc}
\aleph  \alpha = \beta, 
\end{equation}
where 
\begin{equation}  \label{eq:mA}
\aleph = 
\begin{bmatrix}
\frac{1^2}{2!} & \frac{2^2}{2!} & \cdots & \frac{n^2}{2!} \\[0.2cm]
\frac{1^4}{4!} & \frac{2^4}{4!} & \cdots & \frac{n^4}{4!} \\[0.2cm] 
\vdots & \vdots & \ddots & \vdots \\[0.2cm]
\frac{1^{2n}}{(2n)!} & \frac{2^{2n}}{(2n)!} & \cdots & \frac{n^{2n}}{(2n)!} \\[0.2cm] 
\end{bmatrix},
\end{equation}
which is always invertible for any positive integer $n$. For simplicity, we also denote the following matrix
\begin{equation} \label{eq:mB}
\tilde \aleph = 
\begin{bmatrix}
\frac{1^4}{4!} & \frac{2^4}{4!} & \cdots & \frac{n^4}{4!} \\[0.2cm] 
\frac{1^6}{6!} & \frac{2^6}{6!} & \cdots & \frac{n^6}{6!} \\[0.2cm]
\vdots & \vdots & \ddots & \vdots \\[0.2cm]
\frac{1^{2n+2}}{(2n+2)!} & \frac{2^{2n+2}}{(2n+2)!} & \cdots & \frac{n^{2n+2}}{(2n+2)!} \\[0.2cm] 
\end{bmatrix},
\end{equation}
which is always invertible for any positive integer $n$. For the $p$-th order isogeometric elements with stiffness entries $\hat A_p = [A^{j+1}_{p} \quad A^{j+2}_{p} \quad \cdots \quad A^{j+p}_{p} ]$, that is a half of $A_p$ defined in \eqref{eq:UAB}, the local problem is to find 
\begin{equation}
\hat B_{p,O} = [B^{j+1}_{p,O} \quad B^{j+2}_{p,O} \quad \cdots \quad B^{j+p}_{p,O} ]
\end{equation}
 satisfying \eqref{eq:dmmc} in the form of
\begin{equation} \label{eq:dmmc0}
\aleph \hat B_{p,O} = - \tilde \aleph \hat A_p.
\end{equation}

Due to the non-singularity of the matrix $\aleph$, $B_{p,O}$ is uniquely solvable. Once $\hat B_{p,O}$ is obtained, 
we extend its definition to all relevant entries using the symmetry of the entries to the mass, that is, 
\begin{equation}
B^{j-k}_{p,O} = B^{j+k}_{p,O}, \qquad k = 1,2,\cdots, p.
\end{equation}
Due to the partition of unity of the B-spline basis functions, invoking mass conservation, we also define the middle entry 
\begin{equation}
B^j_{p,O} = 1- 2 \sum_{k=1}^p B^{j+k}_{p,O}.
\end{equation}

For $p=1,2,3,4$, these entries are listed in the right-most column of Table \ref{tab:AB}.
We call the mass entries 
\begin{equation} \label{eq:newmass}
B_{p,O} = [B^{j-p}_{p,O} \quad B^{j-p+1}_{p,O} \quad \cdots \quad B^{j+p}_{p,O} ]
\end{equation}
the dispersion-minimized mass entries as they render the minimal dispersion error, a result we derive in the following subsection.

\begin{remark}
The dispersion-minimized mass entries corresponding to the boundary elements are obtained in a similar fashion. For simplicity, we limit our discussion here to periodic boundary conditions. For non-uniform meshes and non-constant diffusion coefficient cases, the dispersion-minimized mass entries can be also obtained in a similar fashion. We leave this for future study as the analysis is more involved. The extension to multiple dimensions is presented in the Section \ref{sec:md}.
\end{remark}

\subsection{Minimized dispersion error}
In this section, we derive the minimized dispersion error for the mass entries $B_{p,O}$ defined in \eqref{eq:newmass}.
Firstly, we establish the following identity.
\begin{lemma} \label{lem:AkBkp+1}
For any positive integer $p$ with $B_{p,O}$ defined in \eqref{eq:dmmc0}, there holds
\begin{equation} 
\sum_{k=1}^p \Big( \frac{k^{2m}}{(2m)!} A_p^{j+k}  + \frac{k^{2m-2}}{(2m-2)!} B_{p,O}^{j+k} \Big) = 0, \qquad m=2,3,\cdots, p+1.
\end{equation}
\end{lemma}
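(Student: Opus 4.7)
The plan is to recognize that the stated identity is nothing more than a row-by-row rewriting of the defining linear system \eqref{eq:dmmc0} for the dispersion-minimized mass entries. The proof therefore requires no additional analytic machinery beyond unpacking that matrix equation and shifting the summation index.

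First I would write the $r$-th row of $\aleph\,\hat B_{p,O} = -\tilde\aleph\,\hat A_p$ explicitly, using the entries displayed in \eqref{eq:mA} and \eqref{eq:mB}. The left-hand side of row $r$ reads $\sum_{k=1}^p \frac{k^{2r}}{(2r)!}\, B_{p,O}^{j+k}$, while the right-hand side reads $-\sum_{k=1}^p \frac{k^{2r+2}}{(2r+2)!}\, A_p^{j+k}$. Collecting both sides into a single sum gives, for each $r\in\{1,\dots,p\}$,
\[
\sum_{k=1}^p \Big( \frac{k^{2r+2}}{(2r+2)!}\, A_p^{j+k} + \frac{k^{2r}}{(2r)!}\, B_{p,O}^{j+k} \Big) = 0.
\]
Substituting $m=r+1$ translates the index set $r\in\{1,\dots,p\}$ into $m\in\{2,\dots,p+1\}$, and the exponents and factorials line up exactly with the claimed identity. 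This completes the argument.

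The only prerequisite I would make sure is in place is the existence of $\hat B_{p,O}$, which relies on the invertibility of $\aleph$ asserted immediately after \eqref{eq:mA}. A one-line justification suffices: up to left-multiplication by the nonsingular diagonal matrix $\operatorname{diag}\bigl(1/(2m)!\bigr)_{m=1}^{p}$ and right-multiplication by $\operatorname{diag}(k^{2})_{k=1}^{p}$, $\aleph$ is a classical Vandermonde matrix in the distinct nodes $1^{2}, 2^{2}, \dots, p^{2}$, so $\det(\aleph)\neq 0$ and $\hat B_{p,O}$ is uniquely determined by \eqref{eq:dmmc0}.

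I do not anticipate any substantive obstacle: the lemma is a tautology once the definition of $\hat B_{p,O}$ is spelled out in coordinates, in contrast to Lemma \ref{lem:AkBk} (which required Postulate \ref{lem:FkGk} and nested recursions on $p$). The genuinely nontrivial content sits one step later, where this extended identity is fed into the expansion of Lemma \ref{lem:aoverb} so that every coefficient $c_{2m}$ up to $m=p+1$ vanishes, producing the superconvergence order $2p+2$ that motivates the construction.
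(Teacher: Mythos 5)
Your proposal is correct and follows essentially the same route as the paper: the paper's proof likewise just rewrites the $m$-th row of $\aleph\,\hat B_{p,O} = -\tilde\aleph\,\hat A_p$ as $\sum_{k=1}^p \frac{k^{2m}}{(2m)!} B_{p,O}^{j+k} = -\sum_{k=1}^p \frac{k^{2m+2}}{(2m+2)!} A_{p}^{j+k}$ for $m=1,\dots,p$ and reindexes. Your added Vandermonde justification of the invertibility of $\aleph$ (which the paper merely asserts) is a welcome, if inessential, supplement.
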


\begin{proof}
Substituting \eqref{eq:mA} and \eqref{eq:mB} into the matrix system \eqref{eq:dmmc0}, we write it as a summation 
\begin{equation*}
\sum_{k=1}^p \frac{k^{2m}}{(2m)!} B_{p,O}^{j+k} = -\sum_{k=1}^p \frac{k^{2m+2}}{(2m+2)!} A_{p}^{j+k} , \qquad m = 1,2,\cdots, p.
\end{equation*}
Rewriting this equation completes the proof.
\end{proof}

The identity in Lemma \ref{lem:AkBkp+1} is true for $m=2,\cdots, p+1$, which is satisfied for one more equation than that in \eqref{eq:AkBk}. This extra identity for $m=p+1$ gives us superconvergence, which is an order of $2p+2$. We establish the minimized dispersion error as follows. 

\begin{theorem} \label{thm:dmmdisperr}
For each discrete mode  $\omega_h$, the discrete dispersion error is
\begin{equation} \label{eq:dmmdisperr}
\omega_h^2 - \mu^2 = 2 (-1)^p \Big( \sum_{k=1}^p \frac{k^{2p+4}}{(2p+4)!} A^{j+k}_p  + \frac{k^{2p+2}}{(2p+2)!} B^{j+k}_{p,O}  \Big) \mu^{2p+4} h^{2p+2} + \mathcal{O}(h^{2p+4}).
\end{equation}
\end{theorem}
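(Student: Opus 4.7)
The strategy is to reuse the proof architecture of Theorem \ref{thm:eigenexpansion} almost verbatim, but with the mass row $B_p$ replaced throughout by $B_{p,O}$. Since the stiffness row $A_p$ is left unchanged (as motivated at the start of Section \ref{sec:dmm}), substituting the Bloch wave ansatz $U^j_p = e^{ij\mu h}$ into \eqref{eq:Hvf} with the modified mass form yields the Rayleigh quotient
\[
\omega_h^2 \;=\; \frac{A_p U_p}{B_{p,O} U_p \, h^2}.
\]
I would then repeat the derivation of Lemma \ref{lem:aoverb} with $B^{j+k}_p$ replaced by $B^{j+k}_{p,O}$: expand numerator and denominator as even Taylor series in $\Lambda = \mu h$ using $\cos(k\Lambda) = \sum_m (-1)^m (k\Lambda)^{2m}/(2m)!$ and match coefficients of equal powers of $\Lambda$ to produce the analogue of the recursion \eqref{eq:cs}.

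Next I would nail down the low-order constants. The vanishing of odd-index coefficients is automatic from the symmetry preserved by the construction $B^{j-k}_{p,O} = B^{j+k}_{p,O}$. The identity $c_0 = 0$ then follows from $\sum_{k=-p}^p A^{j+k}_p = 0$ together with the partition-of-unity property for the modified mass, $\sum_{k=-p}^p B^{j+k}_{p,O} = 1$, which is forced by the definition $B^j_{p,O} := 1 - 2\sum_{k=1}^p B^{j+k}_{p,O}$. Lemma \ref{lem:Ak} then yields $c_2 = 1$, exactly as in the standard case.

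The crucial improvement occurs in the induction that would replace Lemma \ref{lem:C2m}. I would prove inductively on $m$ that $c_{2m} = 0$ for $m = 2, 3, \ldots, p+1$. At each step, after substituting $c_0 = 0$, $c_2 = 1$ and $c_{2m-2q} = 0$ for intermediate indices, only the $q = m-1$ term of the inner sum survives and the recursion collapses to
\[
c_{2m} \;=\; 2(-1)^m \sum_{k=1}^p \Big( \frac{k^{2m}}{(2m)!} A_p^{j+k} + \frac{k^{2m-2}}{(2m-2)!} B_{p,O}^{j+k} \Big),
\]
whose right-hand side vanishes by Lemma \ref{lem:AkBkp+1}. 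The key novelty is that this identity now holds for the additional value $m = p+1$, killing the $h^{2p}$ contribution that set the rate in Theorem \ref{thm:eigenexpansion}.

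Finally, one evaluates $c_{2p+4}$ from the $m = p+2$ equation of the recursion. The only non-vanishing $c_{2m-2q}$ is $c_2 = 1$ at $q = p+1$, which gives, after a quick sign check,
\[
c_{2p+4} \;=\; 2(-1)^{p}\Big(\sum_{k=1}^p \frac{k^{2p+4}}{(2p+4)!} A^{j+k}_p + \sum_{k=1}^p \frac{k^{2p+2}}{(2p+2)!} B^{j+k}_{p,O}\Big).
\]
Substituting into $\omega_h^2 = (c_2\Lambda^2 + c_{2p+4}\Lambda^{2p+4} + \cdots)/h^2$ with $\Lambda = \mu h$ produces \eqref{eq:dmmdisperr}. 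The main delicacy is merely bookkeeping — tracking the $(-1)^p$ sign and verifying that no other $c_{2m-2q}$ contributes at $m = p+2$; all the heavy combinatorial work is absorbed into Lemma \ref{lem:AkBkp+1}, which itself is just a reindexing of the defining linear system \eqref{eq:dmmc0}.
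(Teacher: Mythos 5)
Your proposal is correct and follows essentially the same route as the paper, which simply invokes Lemma \ref{lem:AkBkp+1} and repeats the coefficient-matching argument of Section \ref{sec:disperr} with $B_p$ replaced by $B_{p,O}$. Your explicit verification that $\sum_{k=-p}^{p}B^{j+k}_{p,O}=1$ (so $c_0=0$, $c_2=1$ still hold) and your evaluation of $c_{2p+4}$ from the $m=p+2$ equation, including the sign $(-1)^p$, fill in the details the paper leaves implicit and are accurate.
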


\begin{proof}
Using the previous lemma and following the same type of arguments in Section \ref{sec:disperr} completes the proof.
\end{proof}

\begin{remark}
Theorem \ref{thm:dmmdisperr} further implies the superconvergence order of the eigenvalue error, that is 
\begin{equation} \label{eq:ee2more}
| \tilde \lambda^h_{p,O} - \lambda | \le C h^{2p+2},
\end{equation}
where $\tilde \lambda^h_{p,O}$ is the approximated eigenvalue when the dispersion-minimized mass is utilized. The dispersion error is minimized as it cannot be further reduced as there are no more degrees of freedom left on mass entries. In the next Section, we show this minimized dispersion error by optimal blending quadratures. Furthermore, we show that both the convergence orders and the leading order coefficients are the same.
\end{remark}

\subsection{Quadrature rules for dispersion-minimized mass} \label{sec:rule}
The local problem \eqref{eq:dmmc0} is a linear system of dimension $p$ for $p$-th order isogemetric elements. Due to the low dimension of the system, it is efficient to assemble these entries for the mass matrix. Despite the efficiency, we present in this subsection the quadrature rules for evaluating the dispersion-minimized mass entries. 

We develop a unified quadrature rule for evaluating both the stiffness and mass entries. 
Invoking the symmetry of the entries, we have in total $2p+2$ restrictions, that is $A_p^{j+k}$ and $B_{p,O}^{j+k}$ for $k=0,1,\cdots, p.$  They are, however, nonlinearly dependent. To construct the dispersion-minimized mass entries as well as the stiffness entries using numerical integration, we seek a unified quadrature rule which has the minimal number of points.

Let $N_p$ be the minimum number of quadrature points required for $p$-th order B-spline elements. The problem of finding the quadrature rule on the reference interval $[0,1]$  is to seek $\hat \varpi_{p,l}$ and $\hat n_{p,l}$ with $l=1,2, \cdots, N_p$ such that for a fixed B-spline basis function $\hat \theta_p^j$, there holds
\begin{equation} \label{eq:newquad}
\begin{aligned}
a_h(\hat \theta_p^j, \hat \theta_p^{j+k}) & = \sum_{l=1}^{N_p} \hat \varpi_{p,l} \nabla \big( \hat \theta_p^j ( \hat n_{p,l} )) \cdot \nabla \big( \hat \theta_p^{j+k} (\hat n_{p,l}) ) = A_{p}^{j+k}, \quad & k =0,1, \cdots p, \\
b_h(\hat \theta_p^j, \hat \theta_p^{j+k}) & = \sum_{l=1}^{N_p} \hat \varpi_{p,l} \hat \theta_p^j ( \hat n_{p,l} ) \hat \theta_p^{j+k} (\hat n_{p,l}) = B_{p,O}^{j+k}, \quad & k  =0,1, \cdots p,
\end{aligned}
\end{equation}
where $B_{p,O}^{j+k}$ is the solution of \eqref{eq:dmmc0}. 

\begin{remark}
We solve \eqref{eq:newquad} by symbolical calculation using \textbf{Mathematica}. From the definition \eqref{eq:ABdefinition}, both $A_p^{j+k}$ and $B_{p,O}^{j+k}$ are values already on the reference interval. These entries are listed in the first and the last columns in the Table \ref{tab:AB}. There are $2p+2$ restrictions, and each quadrature point has two degrees of freedom (the point location and its weight). Therefore, $N_p\le p+1$. For any $p$, we find the minimum number $N_p$ by trial and error, running from 1 point to at most $p+1$ points. In the case where the $2p+2$ restrictions can be reduced to an odd number of restrictions, we add a condition $\hat n_{p,1} = 0$. Thus, the resulting quadrature rule is of the Gauss-Radau type (see also \cite{bartovn2016optimal,bartovn2017gauss,kythe2004handbook}).
\end{remark}

We present the quadrature rules for $p=1,2,3$ as follows.
\begin{equation} \label{eq:quaddmm}
\begin{aligned}
p=1: \qquad \hat n_{1,1} & = \frac{1}{2} \pm \frac{\sqrt{6}}{6}, \qquad \hat \varpi_{1,1}  = 1, \\
p=2: \qquad \hat n_{2,1} & = 0, \quad \hat n_{2,2} = \frac{1}{2} \pm \frac{\sqrt{15}}{30}, \qquad \hat \varpi_{2,1}  = \frac{2}{7}, \quad \hat \varpi_{2,2}  = \frac{5}{7}, \\
p=3: \qquad \hat n_{3,1} & = 0, \quad \hat n_{3,2} = \frac{1}{2} \pm \frac{\sqrt{14}}{14}, \qquad \hat \varpi_{3,1}  = -\frac{17}{375}, \quad \hat \varpi_{3,2}  = \frac{392}{375}. \\
\end{aligned}
\end{equation}

\begin{remark}
The plus-minus $\pm$ specifies two different rules for $p=1,2,3$. For $p=2$, these rules do not fully integrate $C^0$ polynomials of order up to 3. For $p=3$, there is a negative weight. 
%For $p=4$,  the symbolical calculations show that we can find a two-point rule which satisfies all the conditions in  \eqref{eq:newquad}. However, the resulting points are complicated complex numbers. 
The rules for the boundary elements are different. Developing quadrature rules for higher order and boundary elements will be the subject of further investigation.  
\end{remark}

\section{Optimal blending in 1D} \label{sec:ob}
Section \ref{sec:disperr} establishes the eigenvalue error estimates when both the stiffness and the mass matrix entries are integrated exactly in 1D, which can be done using, for example $G_{p+1}$. Section \ref{sec:dmm} optimized the dispersion error by appropriately defining the mass entries. In this section, we develop the dispersion-minimized mass entries by optimally blending different quadrature rules, which generalizes the results of \cite{calo2017dispersion} from $p=1,2,\cdots, 7$ to arbitrary order $p$.

\subsection{Dispersion error when using other quadrature rules}
Section \ref{sec:ps} discusses that the rules $G_p$, $R_p$, and $L_{p+1}$ integrate $a(\theta_p^j, \theta_p^l)$ exactly and underintegrate $b(\theta_p^j, \theta_p^l)$ in 1D. 
Now, we denote by $Q_p$  any quadrature rule which integrates any polynomial of order $2p-2$  and by $O_p$ the optimal quadrature with minimal dispersion.
% but not higher than $2p-1$ exactly as $Q_p$. 
 In one dimension, since $a(\theta_p^j, \theta_p^l)$ is the integration of polynomials of order $2p-2$ while $b(\theta_p^j, \theta_p^l)$ is the integration of polynomials of order $2p$, $Q_p$ integrates $a(\theta_p^j, \theta_p^l)$ exactly and underintegrates $b(\theta_p^j, \theta_p^l)$. Both $G_p$ and $L_{p+1}$ are typical examples of such quadrature rules of type $Q_p$. There are  infinitely many such quadrature rules if one disregards the number of quadrature points. For blending, we assume here that $Q_p \ne O_p.$

Now, we denote 
\begin{equation}
\tilde A^{j-k}_{p,Q} = a_h(\theta^{j-k}_p, \theta^j_p) h, \qquad \tilde B^{j-k}_{p,Q} = b_h(\theta^{j-k}_p, \theta^j_p) / h,
\end{equation}
where $Q$ specifies a quadrature rule applied, which can be set to $G_p, R_p, L_{p+1}$, or generically to $Q_p$. In one dimension, one immediately has 
\begin{equation} \label{eq:aat}
\tilde A^{j-k}_{p,Q_p} = A^{j-k}_{p}
\end{equation}
and thus,  we use them interchangeably in the discussion. For $p=1,2,3,4$, the values of $\tilde B^{j-k}_{p,Q}$ where $Q=G_p, R_p, L_{p+1}$ are listed in Table \ref{tab:AB}.

\begin{table}[ht]
\centering 
\begin{tabular}{| c | c || c | c | c | c | c | c| }
\hline
%degree & mesh & $\lambda_1, \eta=1$ & $\lambda_1, \eta=2p+3$ & $\lambda_3, \eta=1$ & $\lambda_3, \eta=2p+3$ \\[0.1cm] 
$p$ & $k$ & $A_p^{j+k}$ & $B_p^{j+k}$   & $\tilde B_{p,G_p}^{j+k}$ & $\tilde B_{p,L_{p+1}}^{j+k}$ & $\tilde B_{p,R_p}^{j+k}$ & $\tilde B_{p,O_p}^{j+k}$  \\[0.1cm] \hline

   & 0 & 2 & $\frac{2}{3}$ & $\frac{1}{2}$ & 1 & 1 & $\frac{5}{6}$ \\[0.2cm]
1 & 1 & $-1$ & $\frac{1}{6}$ & $\frac{1}{4}$ & 0 & 0 & $\frac{1}{12}$ \\[0.2cm] \hline

 & 0 & 1 & $\frac{11}{20}$ & $\frac{13}{24}$ & $\frac{9}{16}$ & $\frac{5}{9}$ & $\frac{67}{120}$ \\[0.2cm]
2 & 1 & $-\frac{1}{3}$ & $\frac{13}{60}$ & $\frac{2}{9}$ & $\frac{5}{24}$ & $\frac{23}{108}$ & $\frac{19}{90}$ \\[0.2cm]
   & 2 & $-\frac{1}{6}$ & $\frac{1}{120}$ & $\frac{1}{144}$ & $\frac{1}{96}$ & $\frac{1}{108}$ & $\frac{7}{720}$ \\[0.2cm] \hline

 & 0 & $\frac{2}{3}$ & $\frac{151}{315}$ & $\frac{23}{48}$ & $\frac{259}{540}$ & $\frac{863}{1800}$ & $\frac{3629}{7560}$ \\[0.2cm]
   & 1 & $-\frac{1}{8}$ & $\frac{397}{1680}$ &$\frac{227}{960}$ & $\frac{17}{72}$ & $\frac{189}{800}$ & $\frac{2377}{10080}$ \\[0.2cm]
3   & 2 & $-\frac{1}{5}$ & $\frac{1}{42}$ & $\frac{19}{800}$& $\frac{43}{1800}$ & $\frac{143}{6000}$ & $\frac{121}{5040}$ \\[0.2cm]
   & 3 & $-\frac{1}{120}$ & $\frac{1}{5040}$ &$\frac{1}{4800}$ & $\frac{1}{5400}$ & $\frac{7}{36000}$ & $\frac{1}{6048}$ \\[0.2cm] \hline
   
 & 0 & $\frac{35}{72}$ & $\frac{15619}{36288}$ & $\frac{52063}{120960}$ & $\frac{41651}{96768}$ & $\frac{91111}{211680}$ & $\frac{156211}{362880}$ \\[0.2cm]
   & 1 & $-\frac{11}{360}$ & $\frac{44117}{181440}$ & $\frac{73529}{302400}$ & $\frac{29411}{120960}$ & $\frac{514697}{2116800}$ & $\frac{220543}{907200}$ \\[0.2cm]
4   & 2 & $-\frac{17}{90}$ & $\frac{913}{22680}$ & $\frac{1739}{43200}$ & $\frac{9739}{241920}$ & $\frac{42607}{1058400}$ & $\frac{36541}{907200}$ \\[0.2cm]
   & 3 & $-\frac{59}{2520}$ & $\frac{251}{181440}$ &$\frac{2929}{2116800}$ & $\frac{1171}{846720}$ & $\frac{20497}{14817600}$ & $\frac{1249}{907200}$ \\[0.2cm]
   & 4 & $-\frac{1}{5040}$ & $\frac{1}{362880}$ & $\frac{23}{8467200}$ & $\frac{19}{6773760}$ & $\frac{41}{14817600}$ & $\frac{13}{3628800}$ \\[0.2cm] \hline   

\hline
\end{tabular}
\caption{Stiffness and mass entries $A_p^{j+k}$ and $B_p^{j+k}$ as well as the approximated mass entries $\tilde B_{p,Q}^{j+k}$ for $Q=G_p, L_{p+1}, R_p, O_p$. The entries in the last column of $\tilde B_{p,O_p}^{j+k}$ are also the dispersion-minimized mass entries.} 
\label{tab:AB} 
\end{table}

To derive the dispersion error when we apply $Q_p$, we first present the following Postulate and Theorem.
\begin{postulate} \label{pos:tpbsplines}
For any positive integer $p>1$ and $m=2,3,\cdots, p$, there holds
\begin{equation}
\sum_{k=1}^p \Big( \frac{k^{2m}}{(2m)!} \tilde A_{p,Q_p}^{j+k}  + \frac{k^{2m-2}}{(2m-2)!} \tilde B_{p,Q_p}^{j+k} \Big) = 0.
\end{equation}
\end{postulate}

\begin{remark}
This Postulate generalizes Lemma \ref{lem:AkBk}. The result is verified for $p=1,2,3,4$ using the values listed in Table \ref{tab:AB}. For an arbitrary quadrature rule $Q_p$ and any order $p$, the proof is an open question and will be the subject of future work.
\end{remark}

\begin{theorem} \label{thm:Qperr}
For each eigenvalue, there holds
\begin{equation} \label{eq:teigenerr}
\tilde \lambda^h_{p,Q_p} - \lambda = 2 (-1)^{p+1} \Big( \sum_{k=1}^p \frac{k^{2p+2}}{(2p+2)!} \tilde A^{j+k}_{p,Q_p}  + \frac{k^{2p}}{(2p)!} \tilde B^{j+k}_{p,Q_p}  \Big) \mu^{2p+2} h^{2p} + \mathcal{O}(h^{2p+2}),
\end{equation}
where $\tilde \lambda^h_{p,Q_p}$ denotes the approximated eigenvalue while $Q_p$ is applied.
\end{theorem}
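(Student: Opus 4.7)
The plan is to mirror the proof of Theorem \ref{thm:eigenexpansion} verbatim, replacing the exactly-integrated entries $A^{j+k}_p, B^{j+k}_p$ by their quadrature-approximated analogues $\tilde A^{j+k}_{p,Q_p}, \tilde B^{j+k}_{p,Q_p}$ throughout. The same coefficient-matching argument of Lemma \ref{lem:aoverb} still applies; the vanishing of the intermediate coefficients is now supplied by Postulate \ref{pos:tpbsplines} rather than by Lemma \ref{lem:AkBk}, and the identity $\tilde A_{p,Q_p}^{j+k}=A_p^{j+k}$ from \eqref{eq:aat} ensures that every statement involving only the stiffness entries carries over unchanged.

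The concrete steps I would carry out are as follows. First, I would set $w^h=\theta_p^j$ in the third line of \eqref{eq:Hvf} and impose the Bloch wave ansatz \eqref{eq:bloch}; using B-spline symmetry and Euler's formula, this yields the quadrature-version of \eqref{eq:abeh},
\begin{equation*}
a_h(\theta^j_p,u^h)=\Bigl(\tilde A^j_{p,Q_p}+2\sum_{k=1}^p\tilde A^{j+k}_{p,Q_p}\cos(k\mu h)\Bigr)e^{ij\mu h}/h,
\end{equation*}
with the analogous expression for $b_h$, leading to the discrete Rayleigh quotient $\omega_h^2=\tilde A_{p,Q_p}U_p/(\tilde B_{p,Q_p}U_p h^2)$. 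Second, setting $\Lambda=\mu h$ and positing a power series $\tilde A_{p,Q_p}U_p/\tilde B_{p,Q_p}U_p = \sum_q c_q\Lambda^q$, I would Taylor-expand $\cos(k\Lambda)$ and match coefficients to obtain the approximated analogue of the system \eqref{eq:cs}. Third, I would conclude that $c_0=0$ and $c_2=1$ by combining \eqref{eq:aat} with the partition-of-unity identity \eqref{eq:pu} and with Lemma \ref{lem:Ak}, and then that $c_{2m}=0$ for $m=2,\dots,p$ by applying Postulate \ref{pos:tpbsplines} in the obvious analogue of Lemma \ref{lem:C2m}. Finally, I would read off the leading non-vanishing term
\begin{equation*}
c_{2p+2}=2(-1)^{p+1}\Bigl(\sum_{k=1}^p\tfrac{k^{2p+2}}{(2p+2)!}\tilde A^{j+k}_{p,Q_p}+\tfrac{k^{2p}}{(2p)!}\tilde B^{j+k}_{p,Q_p}\Bigr),
\end{equation*}
and invoke the duality principle of \cite{hughes2008duality} to convert the resulting dispersion expansion into the eigenvalue expansion \eqref{eq:teigenerr}.

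The main obstacle is that the argument is conditional on Postulate \ref{pos:tpbsplines}, which the paper only verifies computationally in \textbf{Mathematica} for small $p$. A rigorous proof for arbitrary $p$ and any admissible $Q_p$ seems to require a finer understanding of how generic quadrature rules interact with the B-spline moment identities established in Section \ref{sec:bspline}, and is acknowledged in the paper as open. A secondary check I would need to carry out is that $\sum_k \tilde B^{j+k}_{p,Q_p}=1$, which follows from $Q_p$ integrating constants exactly together with the partition of unity of the B-splines; no further technicalities are introduced by switching from exact to approximated integration.
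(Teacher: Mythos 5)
Your proposal is correct and follows essentially the same route as the paper: the paper's proof is precisely ``apply Postulate \ref{pos:tpbsplines} and repeat the arguments of Section \ref{sec:disperr} with $B_p$ replaced by $\tilde B_{p,Q_p}$,'' which is what you carry out in detail, including the correct observations that \eqref{eq:aat} preserves the stiffness identities and that the result remains conditional on the computationally verified Postulate \ref{pos:tpbsplines}.
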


\begin{proof}
Applying Postulate \ref{pos:tpbsplines} and following the same type of arguments in Section \ref{sec:disperr} with $B_p$ replaced by $\tilde B_{p, Q_p}$, we complete the proof.
\end{proof}

\subsection{Optimal blending}
Assume that $Q_p \ne G_{p+1}$. $Q_p$ does not integrate the mass entries exactly in 1D.  The differences in the leading coefficients of \eqref{eq:eigenerr} and \eqref{eq:teigenerr} allow us to blend different quadratures to remove the leading order terms from the error estimates. From the insights on the lower order cases as done in \cite{calo2017dispersion}, we can consider the following blending quadrature rule
\begin{equation} \label{eq:taubr}
Q_\tau = \tau G_{p+1} + (1-\tau) Q_p,
\end{equation}
where $\tau$ is the blending parameter. Now we have the following results for optimal blending coefficient.

\begin{lemma} \label{lem:opbsplines}
Let 
\begin{equation} \label{eq:tau}
\tau = \frac{\sum_{k=1}^p \frac{k^{2p+2}}{(2p+2)!} A^{j+k}_{p,Q_p}  + \frac{k^{2p}}{(2p)!} \tilde B^{j+k}_{p,Q_p}}{ \sum_{k=1}^p \frac{k^{2p}}{(2p)!} ( \tilde B^{j+k}_{p,Q_p} -B^{j+k}_{p} )  }.
\end{equation}
Then for any positive integer $p$, there holds
\begin{equation} \label{eq:abp1}
\sum_{k=1}^p \frac{k^{2m}}{(2m)!} \tilde A_{p,Q_\tau}^{j+k}  + \frac{k^{2m-2}}{(2m-2)!} \tilde B_{p,Q_\tau}^{j+k}  = 0
\end{equation}
for $m=2,3,\cdots p+1$.
\end{lemma}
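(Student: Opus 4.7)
The plan is to exploit the linearity of the blending rule $Q_\tau = \tau G_{p+1} + (1-\tau)Q_p$ in the quadrature-evaluated bilinear forms and then verify identity (\ref{eq:abp1}) mode-by-mode in $m$. The observation that most of the identities come for free reduces the problem to a single scalar equation in $\tau$, which will give the claimed formula.

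First, because $Q_\tau$ is an affine combination, the matrix entries produced by $Q_\tau$ are the corresponding affine combinations of those produced by $G_{p+1}$ and by $Q_p$. Since $G_{p+1}$ integrates both bilinear forms exactly (\ref{eq:ab=abh}), and $Q_p$ integrates the stiffness bilinear form exactly as well (by (\ref{eq:aat})), one has
\begin{equation*}
\tilde A^{j+k}_{p,Q_\tau}=A^{j+k}_p,\qquad \tilde B^{j+k}_{p,Q_\tau}=\tau B^{j+k}_p+(1-\tau)\tilde B^{j+k}_{p,Q_p}.
\end{equation*}

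Next, for $m=2,3,\dots,p$ the identity (\ref{eq:abp1}) is automatic for every $\tau$. Substituting the splitting above into the left-hand side of (\ref{eq:abp1}) rewrites it as
\begin{equation*}
\tau\!\left(\sum_{k=1}^p \tfrac{k^{2m}}{(2m)!}A^{j+k}_p+\tfrac{k^{2m-2}}{(2m-2)!}B^{j+k}_p\right)+(1-\tau)\!\left(\sum_{k=1}^p \tfrac{k^{2m}}{(2m)!}A^{j+k}_p+\tfrac{k^{2m-2}}{(2m-2)!}\tilde B^{j+k}_{p,Q_p}\right),
\end{equation*}
and the first parenthetical expression vanishes by Lemma \ref{lem:AkBk} while the second vanishes by Postulate \ref{pos:tpbsplines}. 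Hence only the case $m=p+1$ imposes a genuine restriction on $\tau$. Writing $S_A=\sum_k \tfrac{k^{2p+2}}{(2p+2)!}A^{j+k}_p$, $S_B=\sum_k \tfrac{k^{2p}}{(2p)!}B^{j+k}_p$, and $\tilde S_B=\sum_k \tfrac{k^{2p}}{(2p)!}\tilde B^{j+k}_{p,Q_p}$, the $m=p+1$ requirement collapses to the single linear equation $\tau(S_A+S_B)+(1-\tau)(S_A+\tilde S_B)=0$, which solves uniquely as $\tau=(S_A+\tilde S_B)/(\tilde S_B-S_B)$. This is precisely (\ref{eq:tau}).

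The one point that needs a brief justification is that the denominator $\tilde S_B-S_B$ in (\ref{eq:tau}) does not vanish, so that $\tau$ is well-defined. If it did vanish, then comparing Theorem \ref{thm:eigenexpansion} (exact integration) with Theorem \ref{thm:Qperr} ($Q_p$) would show that $Q_p$ already reproduces the leading $h^{2p}$ dispersion coefficient of exact integration, so no affine blending between the two could lower the error order. This is ruled out by the standing assumption $Q_p\ne O_p$ together with the fact that $Q_p$ genuinely under-integrates the mass on $V_h$. Hence $\tau$ in (\ref{eq:tau}) is well-defined, and with this choice identity (\ref{eq:abp1}) holds simultaneously for all $m=2,3,\dots,p+1$, which is the main obstacle and completes the plan.
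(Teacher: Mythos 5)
Your proof is correct and follows essentially the same route as the paper: expand the blended entries affinely in $\tau$, observe that the cases $m=2,\dots,p+1$ split into two sums that vanish termwise by Lemma \ref{lem:AkBk} and Postulate \ref{pos:tpbsplines} for $m\le p$, and solve the single remaining linear equation at $m=p+1$ for $\tau$, recovering \eqref{eq:tau}. Your extra remark on the non-vanishing of the denominator addresses a well-definedness point the paper leaves implicit (though your justification of it is more heuristic than rigorous); it does not change the substance of the argument.
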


\begin{proof}
Applying the blending rule \eqref{eq:taubr} yields % with \eqref{eq:aat} 
\begin{equation}
\begin{aligned}
\tilde A_{p,Q_\tau}^{j+k} & = \tau A_{p}^{j+k} + (1-\tau) \tilde A_{p,Q_p}^{j+k} , \\ % = A_{p}^{j+k}, \\
\tilde B_{p,Q_\tau}^{j+k} & = \tau B_{p}^{j+k} + (1-\tau) \tilde B_{p,Q_p}^{j+k}. \\
\end{aligned}
\end{equation}
Thus
\begin{equation} 
\begin{aligned}
\Xi & = \sum_{k=1}^p \Big( \frac{k^{2m}}{(2m)!} \tilde A_{p,Q_\tau}^{j+k}  + \frac{k^{2m-2}}{(2m-2)!} \tilde B_{p,Q_\tau}^{j+k} \Big) \\
& = \sum_{k=1}^p \frac{k^{2m}}{(2m)!} \big( \tau A_{p}^{j+k} + (1-\tau) \tilde A_{p,Q_p}^{j+k} \big) + \sum_{k=1}^p \frac{k^{2m-2}}{(2m-2)!} \big( \tau B_{p}^{j+k} + (1-\tau) \tilde B_{p,Q_p}^{j+k} \big)  \\
& = \tau \sum_{k=1}^p \Big( \frac{k^{2m}}{(2m)!} A_{p}^{j+k} + \frac{k^{2m-2}}{(2m-2)!} B_{p}^{j+k} \Big) + (1-\tau) \sum_{k=1}^p \Big( \frac{k^{2m}}{(2m)!}  \tilde A_{p,Q_p}^{j+k} + \frac{k^{2m-2}}{(2m-2)!} \tilde B_{p,Q_p}^{j+k}\Big). \\
\end{aligned}
\end{equation}
For $m=2,3,\cdots p,$ applying Lemmas \ref{lem:Ak} and \ref{lem:AkBk} gives 
\begin{equation}
\Xi = \tau \cdot 0 + (1-\tau) \cdot 0 = 0.
\end{equation}
For $m=p+1$, invoking $\tau$ with \eqref{eq:tau}, we obtain 
\begin{equation} 
\begin{aligned}
\Xi & = \sum_{k=1}^p \frac{k^{2p+2}}{(2p+2)!} A^{j+k}_{p,Q_p}  + \frac{k^{2p}}{(2p)!} \big( \tilde B^{j+k}_{p,Q_p} + \tau (B^{j+k}_{p} - \tilde B^{j+k}_{p,Q_p} ) \big) \\
& = \sum_{k=1}^p \frac{k^{2p+2}}{(2p+2)!} A^{j+k}_{p,Q_p}  + \frac{k^{2p}}{(2p)!} \tilde B^{j+k}_{p,Q_p}  + \tau \sum_{k=1}^p \frac{k^{2p}}{(2p)!} (B^{j+k}_{p} - \tilde B^{j+k}_{p,Q_p} ) \\
& = 0.
\end{aligned}
\end{equation}
This completes the proof.
\end{proof}

\begin{theorem} \label{thm:ob}
Let $\tau$ be defined as \eqref{eq:tau}.
Then for each eigenvalue, there holds
\begin{equation} \label{eq:oeigenerr}
\begin{aligned}
\tilde \lambda^h_{p,O_p} - \lambda & = 2 (-1)^{p} \Big( \sum_{k=1}^p \frac{k^{2p+4}}{(2p+4)!} A^{j+k}_{p}  + \frac{k^{2p+2}}{(2p+2)!} ( \tau B^{j+k}_{p} + (1-\tau) \tilde B^{j+k}_{p,Q_p} )  \Big) \mu^{2p+4} h^{2p+2} \\ 
& \quad + \mathcal{O}(h^{2p+4}).
\end{aligned}
\end{equation}
\end{theorem}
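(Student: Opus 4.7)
The plan is to mirror the derivation of Theorem \ref{thm:eigenexpansion} / Theorem \ref{thm:Qperr}, substituting the blended stiffness and mass entries $\tilde A_{p,Q_\tau}^{j+k}$ and $\tilde B_{p,Q_\tau}^{j+k}$ for $A_p^{j+k}$ and $B_p^{j+k}$, and then invoking Lemma \ref{lem:opbsplines} to zero out one additional term in the Taylor expansion of the Rayleigh quotient. The duality principle then converts the dispersion expansion into the eigenvalue expansion.

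First I would note that since $Q_p$ integrates $a(\theta_p^j,\theta_p^l)$ exactly (identity \eqref{eq:aat}), convex combination via \eqref{eq:taubr} preserves the stiffness entries: $\tilde A_{p,Q_\tau}^{j+k} = \tau A_p^{j+k} + (1-\tau)\tilde A_{p,Q_p}^{j+k} = A_p^{j+k}$. For the mass entries one simply has $\tilde B_{p,Q_\tau}^{j+k} = \tau B_p^{j+k} + (1-\tau)\tilde B_{p,Q_p}^{j+k}$, a convex combination that inherits the support and symmetry properties \eqref{eq:symm}--\eqref{eq:0}. Under the Bloch wave ansatz \eqref{eq:bloch}, the blended Rayleigh quotient satisfies
\begin{equation*}
\tilde\omega_h^2 = \frac{\tilde A_{p,Q_\tau} U_p / h}{\tilde B_{p,Q_\tau} U_p \, h} = \frac{1}{h^2}\,\frac{\tilde A_{p,Q_\tau}^j + 2\sum_{k=1}^p \tilde A_{p,Q_\tau}^{j+k}\cos(k\Lambda)}{\tilde B_{p,Q_\tau}^j + 2\sum_{k=1}^p \tilde B_{p,Q_\tau}^{j+k}\cos(k\Lambda)},
\end{equation*}
with $\Lambda=\mu h$, exactly as in Lemma \ref{lem:aoverb}.

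Next I would expand $\cos(k\Lambda)$ as a power series and, writing the ratio as $c_0+c_2\Lambda^2+c_4\Lambda^4+\cdots$ (odd coefficients vanish by symmetry), match powers of $\Lambda$ to obtain the analog of \eqref{eq:cs}. The conditions $c_0=0$ and $c_2=1$ follow from the partition-of-unity identity \eqref{eq:pu} for the mass (which is preserved by the convex combination since both $B_p$ and $\tilde B_{p,Q_p}$ sum to $1$) and Lemma \ref{lem:Ak} applied to the stiffness (unchanged under blending). For $m=2,3,\dots,p+1$, Lemma \ref{lem:opbsplines} delivers exactly the identity \eqref{eq:abp1}, which by the recursive matching forces $c_{2m}=0$. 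Hence the first surviving term is $c_{2p+4}$, which reads
\begin{equation*}
c_{2p+4} = 2(-1)^{p}\!\left( \sum_{k=1}^p \frac{k^{2p+4}}{(2p+4)!} A_p^{j+k} + \frac{k^{2p+2}}{(2p+2)!}\bigl(\tau B_p^{j+k}+(1-\tau)\tilde B_{p,Q_p}^{j+k}\bigr) \right),
\end{equation*}
obtained by isolating the $m=p+2$ equation in \eqref{eq:cs} after the lower $c_{2m}$ have been eliminated.

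Finally I would substitute $\Lambda=\mu h$ to recover the dispersion error $\tilde\omega_h^2-\mu^2 = c_{2p+4}\,\mu^{2p+4}h^{2p+2}+\mathcal{O}(h^{2p+4})$, and then invoke the duality principle \cite{hughes2008duality} (as used in the Corollary following Theorem \ref{thm:eigenexpansion}) to pass from the dispersion error for the Helmholtz problem to the eigenvalue error for \eqref{eq:pde}, giving precisely \eqref{eq:oeigenerr}. The main obstacle is bookkeeping: one must verify carefully that the partition-of-unity and $c_2=1$ conditions survive the convex blending, and that the matching of coefficients $c_{2m}$ in the series solution can be resolved recursively up through $m=p+1$ using Lemma \ref{lem:opbsplines} without invoking Postulate \ref{pos:tpbsplines} beyond the range $m\le p$. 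Once these are in place, extracting the $\Lambda^{2p+4}$ coefficient is essentially identical to the unblended computation in Lemma \ref{lem:aoverb}.
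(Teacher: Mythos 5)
Your proposal is correct and follows essentially the same route as the paper: the paper's proof is precisely ``invoke Lemma \ref{lem:opbsplines}, use \eqref{eq:aat}, and rerun the Section \ref{sec:disperr} expansion with $B_p$ replaced by $\tilde B_{p,Q_\tau}$,'' which is what you carry out in detail. Your extra care in checking that the partition-of-unity normalization (hence $c_0=0$, $c_2=1$) survives the blending, and that Postulate \ref{pos:tpbsplines} is only needed for $m\le p$ with the $m=p+1$ cancellation supplied by the choice of $\tau$, fills in steps the paper leaves implicit but does not change the argument.
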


\begin{proof}

Invoking Lemma \ref{lem:opbsplines}, applying \eqref{eq:aat}, and following the arguments we describe in Section \ref{sec:disperr} with $B_p$ substituted by $\tilde B_{p, Q_\tau}$ completes the proof.
\end{proof}

One can optimally blend other quadrature rules similarly. We denote the following blendings for $Q=G_{p+1}, G_p, L_{p+1}, R_p$
\begin{equation} \label{eq:alltau}
\begin{aligned}
\tau_{gg} G_{p+1} + (1-\tau_{gg}) G_p,  & \qquad
\tau_{gl} G_{p+1} + (1-\tau_{gl}) L_{p+1}, \\
\tau_{gr} G_{p+1} + (1-\tau_{gr}) R_p, & \qquad
\tau_{pl} G_{p} + (1-\tau_{pl}) L_{p+1}, \\
\tau_{pr} G_{p} + (1-\tau_{pr}) R_p, & \qquad
\tau_{lr} L_{p+1} + (1-\tau_{lr}) R_p, \\
\end{aligned}
\end{equation}

Table \ref{tab:tau} shows these blending parameters. We cannot blend $L_{p+1}$ and $R_p$ for $p=1$ as both of them lead to the same mass entries. For higher order $p$ and other quadrature rules, the blending parameters are derived using \eqref{eq:tau} in a similar fashion. 
\begin{table}[ht]
\centering 
\begin{tabular}{| c || c | c | c | c | c | c| }
\hline
%degree & mesh & $\lambda_1, \eta=1$ & $\lambda_1, \eta=2p+3$ & $\lambda_3, \eta=1$ & $\lambda_3, \eta=2p+3$ \\[0.1cm] 
$p$ & $\tau_{gg}$ & $\tau_{gl}$   & $\tau_{gr}$ & $\tau_{pl}$ & $\tau_{pr}$ & $\tau_{lr}$  \\[0.1cm] \hline

1  & 2 & $\frac{1}{2}$ & $\frac{1}{2}$ & $\frac{1}{3}$ & $\frac{1}{3}$ & - - \\[0.1cm] \hline
2  & 2 & $\frac{1}{3}$ & $-\frac{1}{2}$ & $\frac{1}{5}$ & $-\frac{1}{5}$ & $\frac{2}{5}$ \\[0.1cm] \hline
3  &  $\frac{13}{3}$ & $-\frac{3}{2}$ & $-\frac{22}{3}$ & $-\frac{6}{7}$ & $-\frac{44}{21}$ & $\frac{22}{7}$ \\[0.1cm] \hline
4  &  $22$ & $-\frac{79}{5}$ & $-\frac{145}{2}$ & $-\frac{79}{9}$ & $-\frac{145}{9}$ & $\frac{580}{27}$ \\[0.1cm] \hline
\end{tabular}
\caption{Optimal blending parameters for various quadratures.} 
\label{tab:tau} 
\end{table}

\begin{remark}
The parameter $\tau$ defined in \eqref{eq:tau} delivers superconvergence on the eigenvalue errors.  This is the optimal blending parameter as it provides the best possible blending for reducing the dispersion errors. This blending is not limited to combining  $G_{p+1}$ and $Q_{p}$. One can find the optimal blending rule for two different $Q_p$s and all these different optimal blending rules lead to the same error expansion. Moreover, we point out that the mass entries $\tilde B^{j+k}_{p,Q_\tau}$ where $\tau$ is defined in \eqref{eq:tau}, that is, the mass entries of the optimal blending rule, are the same as those of the dispersion-minimized mass of Section \ref{sec:dmm}.
\end{remark}

Theorem \ref{thm:ob} establishes an error estimation for the eigenvalues when we apply the blended quadrature rules 
\begin{equation} \label{eq:2p+2}
| \tilde \lambda^h_{p,O_p} - \lambda | \le C h^{2p+2},
\end{equation}
which is the same as \eqref{eq:ee2more} in Section \ref{sec:dmm}.

It is not possible to combine more quadrature rules to deliver higher order convergence. From the discussions in Section \ref{sec:dmm}, $2p+2$ is the best one can obtain as there are no more degrees of freedom left for the mass entries. Alternatively, the following arguments confirm this statement. 

We consider blending of three different quadrature rules $Q_p^1, Q_p^2$, and $Q_p^3$  such that their corresponding leading terms of the error expansions are different. Theorem \ref{thm:Qperr} allows us to present their error expansions with one more term 
\begin{equation}
\tilde \lambda^h_{p,Q_p^m} - \lambda = T_{2p}^m \mu^{2p+2} h^{2p} + T_{2p+2}^m \mu^{2p+4} h^{2p+2} + \mathcal{O}(h^{2p+4}),
\end{equation}
where 
\begin{equation}
\begin{aligned}
T_{2p}^m & = 2 (-1)^{p+1} \sum_{k=1}^p \frac{k^{2p+2}}{(2p+2)!} \tilde A^{j+k}_{p,Q_p^m}  + \frac{k^{2p}}{(2p)!} \tilde B^{j+k}_{p,Q_p^m}, \\
T_{2p+2}^m & = 2 (-1)^{p} \sum_{k=1}^p \frac{k^{2p+4}}{(2p+4)!} \tilde A^{j+k}_{p,Q_p^m}  + \Big( \frac{k^{2p+2}}{(2p+2)!} + \frac{k^2 }{2!} T_{2p}^m \Big) \tilde B^{j+k}_{p,Q_p^m}. \\
\end{aligned}
\end{equation}
for $m=1,2,3.$ The blending of these three quadrature rules is expressed as
\begin{equation}
Q_{\tau}^3 = \tau_1 Q_p^1 + \tau_2 Q_p^2 + (1 - \tau_1 -\tau_2) Q_p^3.
\end{equation}
All $Q_p^1, Q_p^2, Q_p^3,$ and  $Q_{\tau}^3$ fully integrate the stiffness entries. 
Following the previous arguments, one obtains the error expansion below
\begin{equation}
\begin{aligned}
\tilde \lambda^h_{p,Q_\tau^3} - \lambda & = T_{2p}^O \mu^{2p+2} h^{2p} + T_{2p+2}^O \mu^{2p+4} h^{2p+2} + \mathcal{O}(h^{2p+4}), \\
%& = \big( \tau_1 T_{2p}^1 + \tau_2 T_{2p}^2 + (1 - \tau_1 - \tau_2) T_{2p}^3 \big)  \mu^{2p+2} h^{2p} \\
%& \quad + \big( \tau_1 T_{2p+2}^1 + \tau_2 T_{2p+2}^2 + (1 - \tau_1 - \tau_2) T_{2p+2}^3 \big) \mu^{2p+4} h^{2p+2} + \mathcal{O}(h^{2p+4}). \\
\end{aligned}
\end{equation}
where
\begin{equation}
\begin{aligned}
T_{2p}^O & = \tau_1 T_{2p}^1 + \tau_2 T_{2p}^2 + (1 - \tau_1 - \tau_2) T_{2p}^3 \\
& =  2 (-1)^{p+1} \sum_{k=1}^p \frac{k^{2p+2}}{(2p+2)!} A^{j+k}_{p}  + \frac{k^{2p}}{(2p)!} \Big( \tau_1 \tilde B^{j+k}_{p,Q_p^1} + \tau_2 \tilde B^{j+k}_{p,Q_p^2} + (1-\tau_1-\tau_2) \tilde B^{j+k}_{p,Q_p^3} \Big), \\
T_{2p+2}^O & = 2 (-1)^{p} \sum_{k=1}^p \Big( \frac{k^{2p+4}}{(2p+4)!} A^{j+k}_{p}  \\
& \quad + \big( \frac{k^{2p+2}}{(2p+2)!} + \frac{k^2 }{2!} T_{2p}^O \big) \big( \tau_1 \tilde B^{j+k}_{p,Q_p^1} + \tau_2 \tilde B^{j+k}_{p,Q_p^2} + (1-\tau_1-\tau_2) \tilde B^{j+k}_{p,Q_p^3} \big) \Big).
\end{aligned}
\end{equation}

However, the system 
\begin{equation} \label{eq:tau1tau2}
\begin{aligned}
T_{2p}^O & = 0 \\
T_{2p+2}^O & = 0 \\
\end{aligned}
\end{equation}
has no solution. Using $T_{2p}^O=0$ for $T_{2p+2}^O$, the system \eqref{eq:tau1tau2} reduces to
\begin{equation}
\begin{aligned}
\alpha_{1,1} \tau_1 + \alpha_{1,2} \tau_2 + \alpha_{1,3} (1 - \tau_1 - \tau_2) + \beta_1 & = 0, \\
\alpha_{2,1} \tau_1 + \alpha_{2,2} \tau_2 + \alpha_{2,3} (1 - \tau_1 - \tau_2) + \beta_2 & = 0, \\
\end{aligned}
\end{equation}
where 
\begin{equation}
\begin{aligned}
\alpha_{q,m} & = \sum_{k=1}^p \frac{k^{2p+2q-2}}{(2p+2q-2)!} \tilde B^{j+k}_{p,Q_p^m}, \quad && q  =1, 2, m=1,2,3,\\
\beta_q & = \sum_{k=1}^p \frac{k^{2p+2q}}{(2p+2q)!} A^{j+k}_{p}, && q  = 1,2.
\end{aligned}
\end{equation}
Given that  
\begin{equation} \label{eq:tau1tau2cond}
\begin{aligned}
\frac{ \alpha_{1,1} - \alpha_{1,3} }{ \alpha_{2,1} - \alpha_{2,3} } & = \frac{ \alpha_{1,2} - \alpha_{1,3} }{ \alpha_{2,2} - \alpha_{2,3} } ,\\
\alpha_{1,3} + \beta_1 & \ne ( \alpha_{2,3} + \beta_2 ) \frac{ \alpha_{1,1} - \alpha_{1,3} }{ \alpha_{2,1} - \alpha_{2,3} },
\end{aligned}
\end{equation}
the system \eqref{eq:tau1tau2} has no solution. Verifying \eqref{eq:tau1tau2cond} for arbitrary $p$ and $Q_p^m$ is necessary and will be the subject of future efforts. The condition \eqref{eq:tau1tau2cond} can be verified easily for special cases. For instance, setting $Q_p^1 = G_{p+1}, Q_p^2 = L_{p+1}$, and $Q_p^3 = G_p$, we have the following simplified systems
\begin{equation}
\begin{aligned}
2 \tau_1 + 5 \tau_2 & = 4, \\
14 \tau_1 + 35 \tau_2 & = 50, \\
\end{aligned}
\end{equation}
and
\begin{equation}
\begin{aligned}
3 a + 7 b & = 13, \\
3 a + 7 b & = 63, \\
\end{aligned}
\end{equation}
for $p=2$ and $p=3$, respectively. These systems do not have solutions. Therefore, one cannot increase the convergence orders by blending more than two quadrature rules. Alternatively, one can explain this limitation from the maximum number of unknowns for the mass entries as discussed in Section \ref{sec:dmm}.  For a fixed set of $A_p^{j+k}$, there are $p$ unknowns in the mass entries, that is $\tilde B_p^{j+k}$ with $k=1,2,\cdots, p$. In the optimal blending case, the identity \eqref{eq:abp1} is satisfied for $p$ equations, that is $m=2,3,\cdots, p+1$, thus there are no degrees of freedom left on $\tilde B_p$ for the \eqref{eq:abp1} to be satisfied for $m=p+2$, which prevents us from obtaining a convergence of order $2p+4$.

\section{Extension to multidimension and eigenfunction error estimates} \label{sec:md} 
The analysis of generalization to multidimension is studied in the literature for tensor-product basis functions when using finite elements in \cite{ainsworth2010optimally} and when using isogeometric elements \cite{calo2017dispersion}. From these references, the multidimensional problem admits a nontrivial solution provided that 
\begin{equation} 
\omega^2 = \sum_{k=1}^d  \omega_k^2, 
\end{equation}
or alternatively in the eigenvalue form is 
\begin{equation} \label{eq:1dmd}
\lambda^h = \sum_{k=1}^d \lambda^h_k,
\end{equation}
where $d$ is the dimension and $\lambda^h_k = \omega^2_k$ being the approximated wave frequencies squared.
This implies that the optimal blending for the one-dimensional case extends to the arbitrary dimension and is independent of the number of spatial dimensions. We deduce the corresponding optimized dispersion error expression for multidimensional problems from \eqref{eq:2p+2} and \eqref{eq:1dmd}, which is
\begin{equation} \label{eq:mdo}
| \tilde \lambda^h_{O_p} - \lambda | = C h^{2p+2}.
\end{equation}

We now establish the error estimate for the eigenfunctions in the same fashion as in \cite{calo2017dispersion}. The following theorem establishes the eigenfunction errors.  The work  \cite{calo2017dispersion} established the theorem with a complete proof for isogeometric polynomial order up to $p=7$. We refer to  \cite{calo2017dispersion} for a proof of the following theorem which is a simple extension. 

\begin{theorem} \label{thm:efe}
For a fixed discrete eigenmode, assume that the eigenfunction $u$ and $\tilde u^h$ are normalized, that is, $b(u, u)=1$ and $\tilde b_h(\tilde u^h, \tilde u^h) = 1$, and the signs of eigenfunctions of $u$ and $\tilde u^h$ are chosen such that $b(u, \tilde u^h) > 0$. Then for sufficiently small $h$, we have the estimate
\begin{equation}
\| u - \tilde u^h_Q \|_E \le C h^p,
\end{equation}
where $\| \cdot \|_E$ is the energy norm and $Q$ specifies a quadrature rule $G_{p+1}$, $O_p$, or $Q_p$.
\end{theorem}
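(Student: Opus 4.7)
The plan is to adapt the Pythagorean-style argument of \cite{calo2017dispersion} verbatim, since the ingredients needed for $p \le 7$ there generalize immediately once the dispersion error expansions established in Theorems \ref{thm:eigenexpansion}, \ref{thm:dmmdisperr}, and \ref{thm:Qperr} are in hand. The starting point is the generalized Pythagorean identity for eigenvalue problems with quadrature, which, for a normalized discrete eigenpair $(\tilde\lambda^h, \tilde u^h)$ paired with a continuous eigenpair $(\lambda, u)$ under the sign convention $b(u, \tilde u^h)>0$, reads
\begin{equation*}
a(u - \tilde u^h, u - \tilde u^h) \;=\; (\tilde\lambda^h - \lambda) \;+\; \lambda\, b(u - \tilde u^h, u - \tilde u^h) \;+\; \mathcal{R}_h(\tilde u^h),
\end{equation*}
where $\mathcal{R}_h(\tilde u^h) := [a(\tilde u^h, \tilde u^h) - a_h(\tilde u^h, \tilde u^h)] - \lambda[b(\tilde u^h, \tilde u^h) - b_h(\tilde u^h, \tilde u^h)]$ collects the quadrature consistency errors. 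This identity is derived in \cite{strang1973analysis} for exact quadrature and generalized in \cite{puzyrev2017dispersion} to include inexact integration.

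The bound $\|u - \tilde u^h\|_E \le C h^p$ then follows by estimating each term on the right individually. The eigenvalue error $|\tilde\lambda^h - \lambda|$ is $\mathcal{O}(h^{2p})$ for $G_{p+1}$ and $Q_p$ by Theorems \ref{thm:eigenexpansion} and \ref{thm:Qperr}, and $\mathcal{O}(h^{2p+2})$ for $O_p$ by Theorem \ref{thm:ob}. The $L^2$ eigenfunction error $\|u - \tilde u^h\|_0$ is bounded by a standard Aubin--Nitsche duality argument applied to the dual Laplace problem on the cube $\Omega$, giving $\|u - \tilde u^h\|_0 \le C h^{p+1}$, so $\lambda \, b(u-\tilde u^h, u-\tilde u^h) = \mathcal{O}(h^{2p+2})$. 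For the consistency remainder, the stiffness contribution vanishes in all three cases since $G_{p+1}$, $Q_p$, and the convex blending $O_p$ all exactly integrate the polynomial of degree $2p-2$ that appears in $a(\phi_a,\phi_b)$; the mass contribution is handled by the classical Bramble--Hilbert estimate for the quadrature functional, which, combined with an inverse inequality on $\tilde u^h \in V_h$, yields $|b(\tilde u^h, \tilde u^h) - b_h(\tilde u^h, \tilde u^h)| \le C h^{2p}$. Summing these contributions gives $\|u - \tilde u^h\|_E^2 \le C h^{2p}$ and the claim follows.

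The main obstacle is the rigorous control of the mass consistency term $b(\tilde u^h, \tilde u^h) - b_h(\tilde u^h, \tilde u^h)$ for \emph{arbitrary} $p$. For the listed $p \le 7$ one can verify the bound element by element by direct polynomial computation; for general $p$ one needs a scaling argument on the reference element together with the fact that $Q_p$ annihilates polynomials of degree $2p-2$ so that the quadrature error functional has Bramble--Hilbert kernel on polynomials of that degree, and then to combine with the $H^{p+1}$-regularity of $(\tilde u^h)^2$ obtained from the $C^{p-1}$ smoothness of $\tilde u^h$ across knots. The resulting estimate is uniform in $p$ up to the constant $C$, closing the argument. Since this part is essentially the same bookkeeping carried out in \cite{calo2017dispersion} for low $p$, we follow the paper's convention and refer the reader there for the detailed calculations.
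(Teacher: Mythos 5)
Your proposal follows essentially the same route as the paper, which offers no proof of its own but defers entirely to \cite{calo2017dispersion}: the argument there is precisely the Pythagorean eigenvalue identity with quadrature-consistency corrections that you reconstruct, with the eigenvalue term, the $L^2$ term, and the remainder $\mathcal{R}_h$ bounded as you indicate, and with the detailed consistency bookkeeping deferred to the same reference. One caution within that shared approach: justifying $|b(\tilde u^h,\tilde u^h)-b_h(\tilde u^h,\tilde u^h)|\le Ch^{2p}$ by a naive elementwise Bramble--Hilbert bound combined with inverse inequalities yields only $\mathcal{O}(h)$, so the sharper bound must exploit either the fact that the difference stencil $B_p^{j+k}-\tilde B_{p,Q}^{j+k}$ annihilates $k^{2m}$ for $m=0,\dots,p-1$ (making it a $2p$-th order difference operator) or the closeness of $\tilde u^h$ to the smooth eigenfunction $u$, which is exactly the computation you rightly defer to \cite{calo2017dispersion}.
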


\begin{figure}[ht]
\centering
%\includegraphics[height=7.0cm]{d.eps}
%\caption{Basis functions and their derivatives for $C^1$ quadratic isogeometric analysis.}
\label{fig:d}
\end{figure}

%\section{Non-uniform mesh and non-constant coefficient}
%\textcolor{blue}{This idea may work for non-uniform mesh and non-constant diffusion coefficient, which is subject to further study} One way to propose $B_p$ is to satisfy these equations. Another way is to find integration rules satisfying exact integration for $2p-2$ and optimize the last equation when $m=p+1$.

%Radau Quadrature

\section{Numerical examples} \label{sec:num}
In this section, we present the numerical simulations of the problem \eqref{eq:pde} in one and two dimensions 
%with periodic boundary conditions 
using the dispersion-minimized mass for $C^1$ quadratic and $C^2$ cubic isogeometric elements on uniform meshes.  Both our symbolic and numerical calculations show that the dispersion-minimized mass, the quadrature rules \eqref{eq:quaddmm}, and the optimally-blended quadrature rules (see Table \ref{tab:tau}) yield  the same stiffness and mass entries on uniform meshes in both one and multiple dimensions. We utilize the quadrature rules \eqref{eq:quaddmm} for the following numerical experiments. 

For numerical simulations using higher order isogeometric elements,
%or simulations of the problem \eqref{eq:pde} with non-periodic boundary conditions, 
we refer to \cite{calo2017dispersion}, where optimally blended $G_{p+1}, G_p$, and $L_{p+1}$ quadrature rules are studied for $p=1,2,\cdots 7$. 
%We note that in one dimension $G_{p+1}$ integrates exactly polynomials up to order $2p+1$ while both $G_p$ and $L_{p=1}$ integrates exactly polynomials up to order $2p-1$. 
In this paper, the concept of the optimal blending is extended to Radau and other general rules. In one dimension, $R_p$ integrates exactly polynomials up to order $2p-2$, which is less than $2p-1$ as for $G_p$ and $L_{p+1}$. 
%However, the $G_p$-$R_p$ (or $L_{p+1}$-$R_p$) optimally blended rule yield the same stiffness and mass entries in both one and two dimensions for \eqref{eq:pde} with periodic boundary conditions on uniform meshes. 
For comparison, we also present the numerical results while using $R_p$. 

\begin{figure}[ht]
\centering
\includegraphics[height=5.8cm]{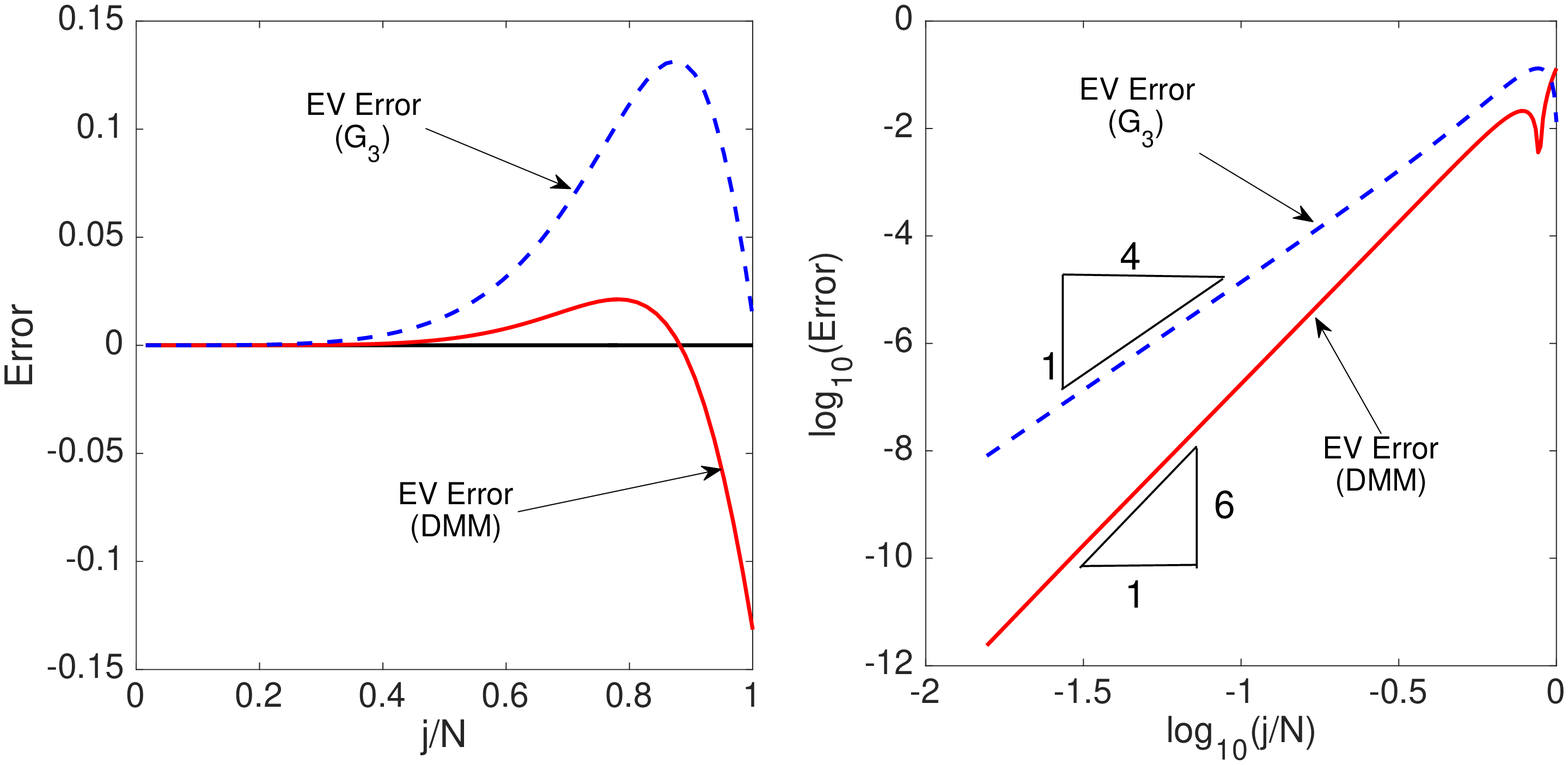} 
\caption{Relative eigenvalue (EV) errors for $C^1$ quadratic isogeometric analysis with fully integrated mass ($G_3$) and dispersion-minimized mass (DMM) in 1D. }
\label{fig:p2eve}
\includegraphics[height=5.8cm]{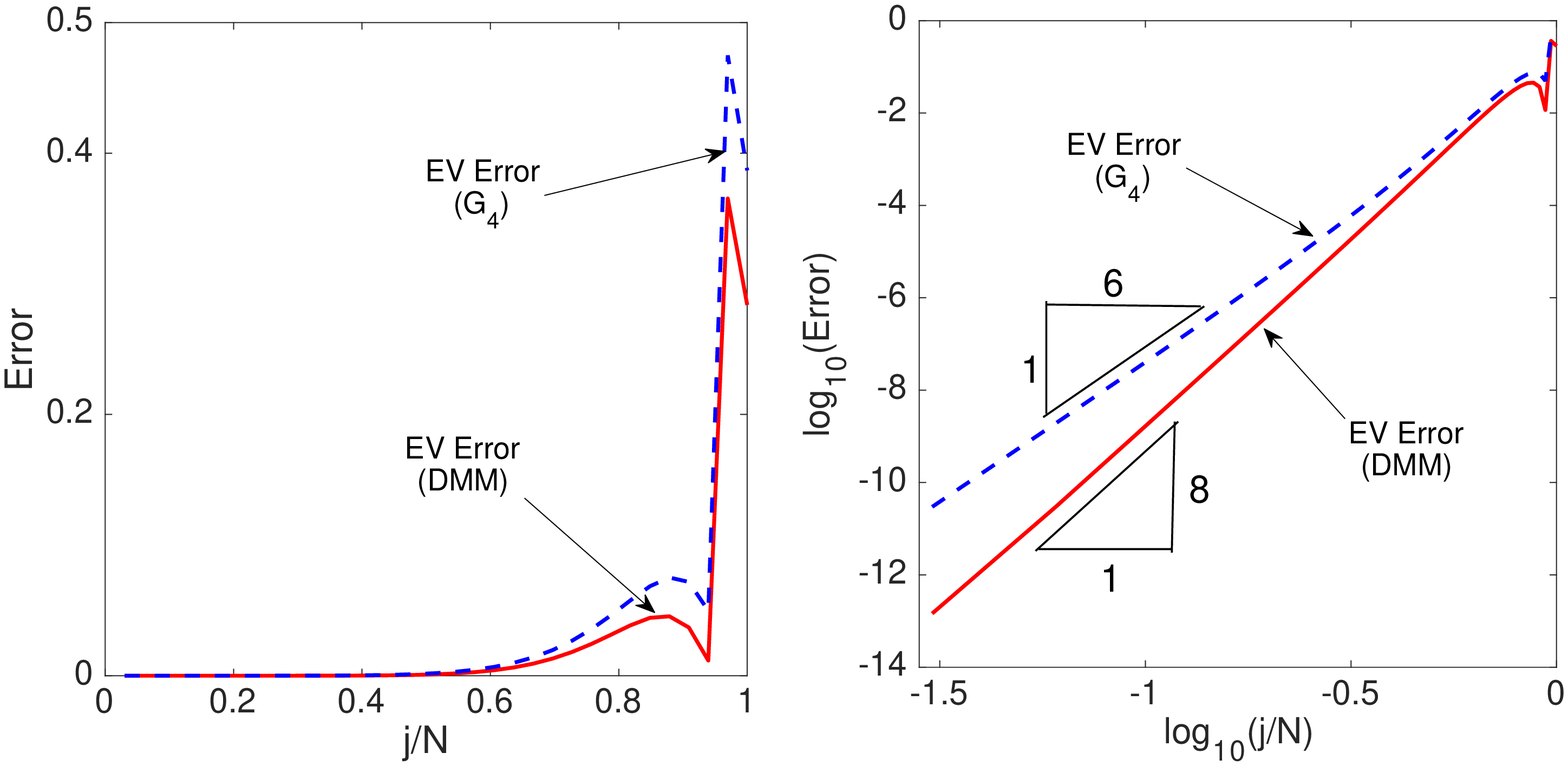} 
\caption{Relative eigenvalue (EV) errors for $C^2$ cubic isogeometric analysis with fully integrated mass ($G_4$) and dispersion-minimized mass (DMM) in 1D. }
\label{fig:p3eve}
\end{figure}

We assume that once the eigenvalue problem is solved, the numerical approximations to the eigenvalues are sorted in ascending order and paired with the true eigenvalues. We focus on the numerical approximation properties of the eigenvalues. In the following, however, we report the relative eigenvalue (EV)  errors  as well as the eigenfunction (EF) errors in energy norm.

\subsection{Numerical study in 1D}
We consider $\Omega = [0, 1]$. The one dimensional differential eigenvalue problem \eqref{eq:pde} %with periodic homogeneous boundary conditions 
has true eigenvalues and eigenfunctions 
\begin{equation}
\lambda_j = j^2 \pi^2, \quad \text{and} \quad u_j = \sqrt{2} \sin( j\pi x), \quad j = 1, 2, \cdots,
\end{equation}
respectively. Figures \ref{fig:p2eve} and \ref{fig:p3eve} show the relative eigenvalue errors, defined as $\frac{\lambda_j^h - \lambda_j}{\lambda_j}$, of $C^1$ quadratic and $C^2$ cubic isogeometric approximations, respectively. The isogeometric mass entries are fully integrated by the Gauss rule $G_{p+1}$ and compared with the dispersion-minimized mass. The meshes are uniform and the mesh size for $C^1$ quadratic isogeometric elements is $1/64$ while $1/32$ for the cubic case. For both $p=2$ and $p=3$, the dispersion-minimized mass leads to smaller eigenvalue errors and their convergence rates are of order $2p+2$, which is two extra order of convergence than those of the fully integrated cases. These convergence rates shown in Figures \ref{fig:p2eve} and \ref{fig:p3eve} are with respect to the wave numbers as we fixed the mesh size $h$. These rates confirm the discussion of Remark \ref{remk:2p} in terms of the wave numbers. For eigenfunctions, Figures \ref{fig:p2efe} and \ref{fig:p3efe} show the energy norm eigenfunction errors of $C^1$ quadratic and $C^2$ cubic isogeometric approximations, respectively. The errors are of optimal convergence order $p$. 

\begin{table}[ht]
\centering 
\begin{tabular}{| c | c | c | c | c | c | c | c| c| c| c| c|}
\hline
%\multicolumn{10}{|c|}{$p=2$}  \\[0.1cm] \hline
 \multicolumn{2}{|c|}{Set} &  \multicolumn{3}{c|} {$ |\lambda_1^h - \lambda_1| / \lambda_1$ }  & \multicolumn{3}{c|} {$ |\lambda_2^h - \lambda_2| /  \lambda_2$}  & \multicolumn{3}{c|} {$ |\lambda_4^h - \lambda_4| /  \lambda_4$}   \\[0.1cm] \hline
$p$ &  $N$ & $G_{p+1}$ & $R_p$ & DMM & $G_{p+1}$ & $R_p$ & DMM & $G_{p+1}$ & $R_p$ & DMM  \\[0.1cm] \hline

	&  8	& 3.4e-5	& 3.6e-6	& 6.7e-7	& 6.0e-4	& 8.3e-5	& 4.3e-5	& 1.3e-2	& 2.9e-3	& 2.8e-3 \\[0.1cm]
	& 16	& 2.1e-6	& 4.5e-7	& 1.0e-8	& 3.4e-5	& 7.7e-6	& 6.7e-7	& 6.0e-4	& 1.6e-4	& 4.3e-5 \\[0.1cm]
2	& 32	& 1.3e-7	& 3.5e-8	& 1.6e-10	& 2.1e-6	& 5.8e-7	& 1.0e-8	& 3.4e-5	& 9.8e-6	& 6.7e-7 \\[0.1cm]
	& 64	& 8.1e-9	& 2.4e-9	& 2.4e-12	& 1.3e-7	& 3.9e-8	& 1.6e-10	& 2.1e-6	& 6.4e-7	& 1.0e-8 \\[0.1cm] \hline
 \multicolumn{2}{|c|}{$\rho_2$}  & 4.0 & 3.5 & 6.0 & 4.1 & 3.7 & 6.0 & 4.2 & 4.0 & 6.0 \\[0.1cm] \hline
 
	& 4	& 9.7e-6	& 8.8e-6	& 1.7e-6	& 9.9e-4	& 9.3e-4	& 4.5e-4	& 2.4e-1	& 1.8e-1	& 1.9e-1 \\[0.1cm]
	& 8	& 1.3e-7	& 1.2e-7	& 7.3e-9	& 1.0e-5	& 9.1e-6	& 2.0e-6	& 1.1e-3	& 1.1e-3	& 5.6e-4 \\[0.1cm]
3	& 16	& 1.9e-9	& 1.7e-9	& 2.9e-11	& 1.3e-7	& 1.2e-7	& 7.6e-9	& 1.0e-5	& 9.2e-6	& 2.1e-6 \\[0.1cm]
	& 32	& 3.0e-11	& 2.6e-11	& 1.5e-13	& 1.9e-9	& 1.7e-9	& 3.0e-11	& 1.3e-7	& 1.2e-7	& 7.8e-9  \\[0.1cm] \hline
 \multicolumn{2}{|c|}{$\rho_3$}  & 6.1 & 6.1 & 7.8 & 6.3 & 6.3 & 7.9 & 6.9 & 6.9 & 8.1 \\[0.1cm] \hline
 
 \end{tabular}
\caption{Relative eigenvalue (EV) errors for $C^1$ quadratic and $C^2$ cubic isogeometric analysis with fully integrated mass ($G_{p+1}$), underintegrated mass ($R_p$), and dispersion-minimized mass (DMM) in 1D.}
\label{tab:eve1d} 
\end{table}

Fixing the wave numbers, Table \ref{tab:eve1d} shows their relative eigenvalue errors of the first, second, and fourth eigenmodes with respect to the mesh sizes. The convergence rates are denoted as $\rho_p$ for the $p$-th order approximation. For comparison purpose, Table \ref{tab:eve1d} also shows the errors while using the Radau rules. The errors for the Radau rules ($R_p$) are smaller than the errors of the fully integrated system ($G_{p+1}$). The dispersion-minimized mass has  two extra order of superconvergence. All these numerical results confirm our theoretical predictions detailed in the previous sections.

\begin{figure}[ht]
\centering
\includegraphics[height=5.8cm]{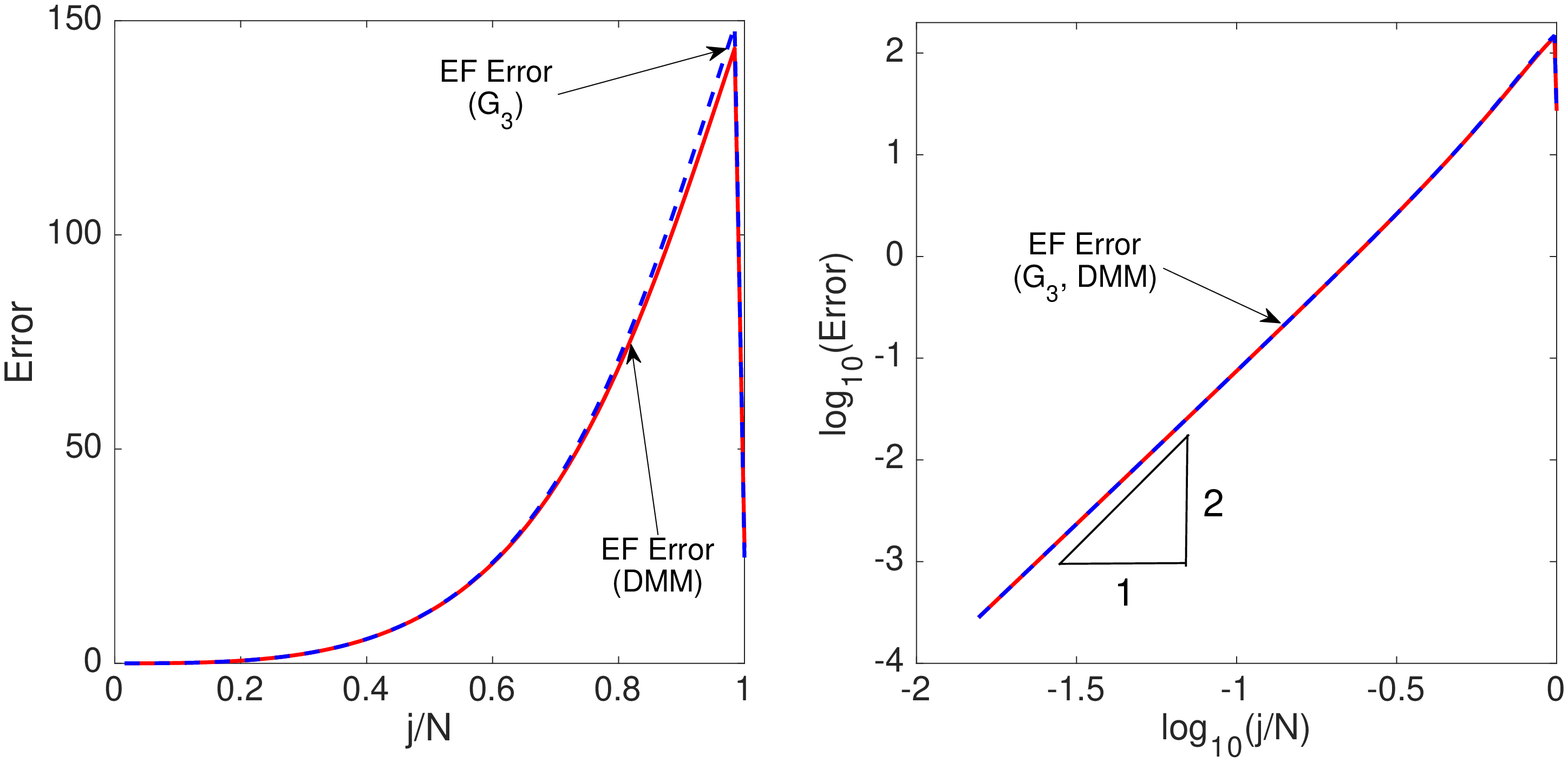} 
\caption{Eigenfunction (EF) error in energy norm for $C^1$ quadratic isogeometric analysis with fully integrated mass ($G_3$) and dispersion-minimized mass (DMM) in 1D.}
\label{fig:p2efe}
\includegraphics[height=5.8cm]{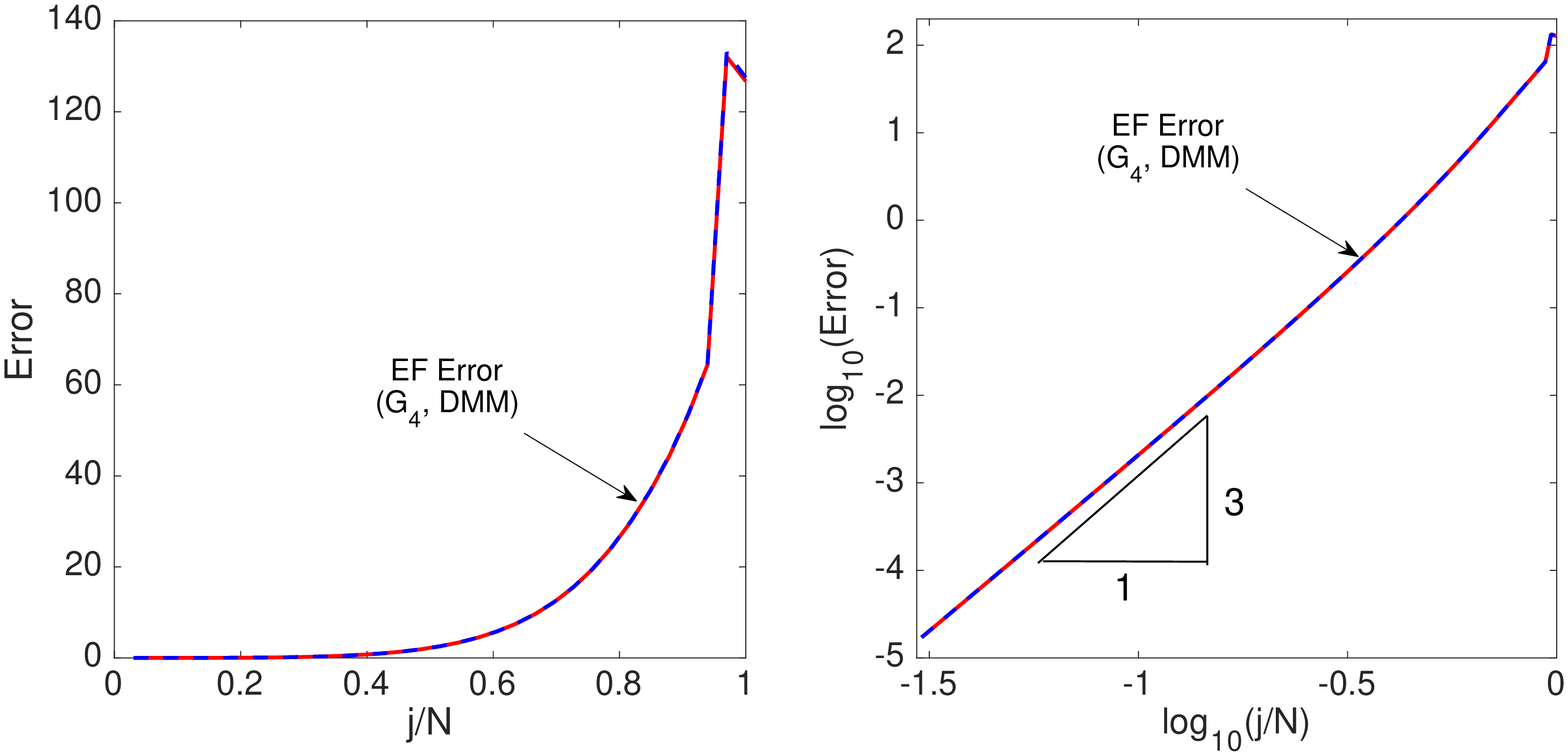} 
\caption{Eigenfunction (EF) error in energy norm for $C^2$ cubic isogeometric analysis with fully integrated mass ($G_4$) and dispersion-minimized mass (DMM) in 1D.}
\label{fig:p3efe}
\end{figure}

\subsection{Numerical study in 2D}
Let $\Omega = [0, 1] \times [0, 1]$.  The two dimensional differential eigenvalue problem \eqref{eq:pde} has exact eigenvalues and eigenfunctions 
\begin{equation}
\lambda_{jk} = ( j^2 + k^2 ) \pi^2, \quad \text{and} \quad u_{jk} = 2 \sin( j\pi x)\sin( k\pi y), \quad j,k = 1, 2, \cdots,
\end{equation}
respectively. We discretize the domain using a tensor-product structure. Figures \ref{fig:p2eve2d} and \ref{fig:p3eve2d} show the relative eigenvalue errors using isogeometric elements approximations for $p=2$ and $p=3$, respectively. The underlying meshes are of $32 \times 32$ uniform elements. We evaluate the isogeometric mass entries by full integration using Gauss rules and underintegration using Radau rules, as well as the dispersion-minimized mass. In general, the dispersion-minimized mass leads to the smallest relative eigenvalue errors while the Gauss rules results in the largest errors. The $p$-point Radau rule fully integrates polynomials up to order ($2p-2$), nevertheless, it behaves better in the simulation than that of the $p+1$ points Gauss rule which exactly integrates polynomials up to ($2p+1$) in 2D. This is also true for 1D and 3D. %, but it may not be true for non-uniform meshes. 

\begin{figure}[!t]
\centering
\includegraphics[height=7.0cm]{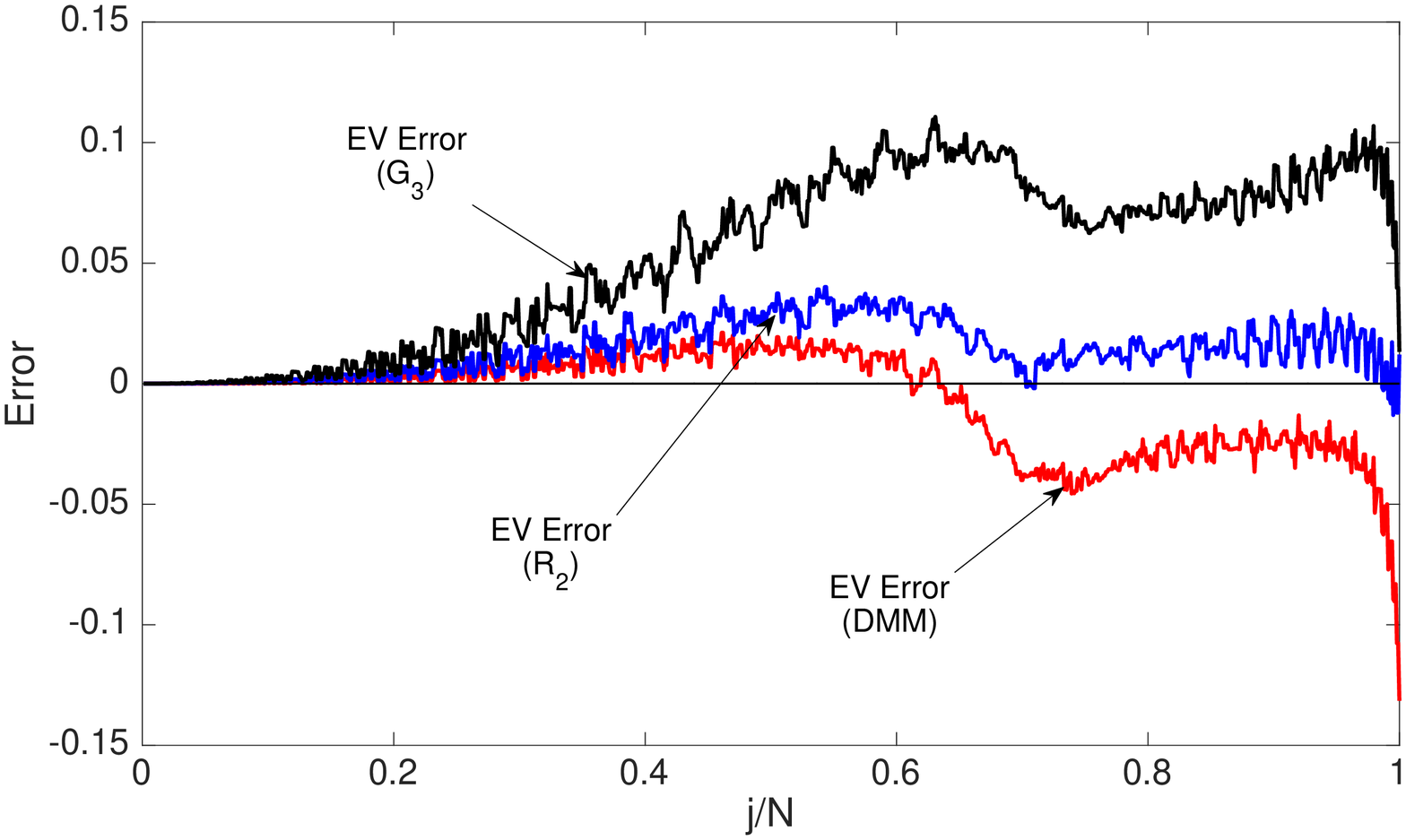} 
\caption{Relative eigenvalue (EV) errors for $C^1$ quadratic isogeometric analysis with fully integrated mass ($G_3$), underintegrated mass ($R_2$), and dispersion-minimized mass (DMM) in 2D. }
\label{fig:p2eve2d}
\end{figure}

\begin{figure}[!t]
\centering
\includegraphics[width=11cm,height=7.0cm]{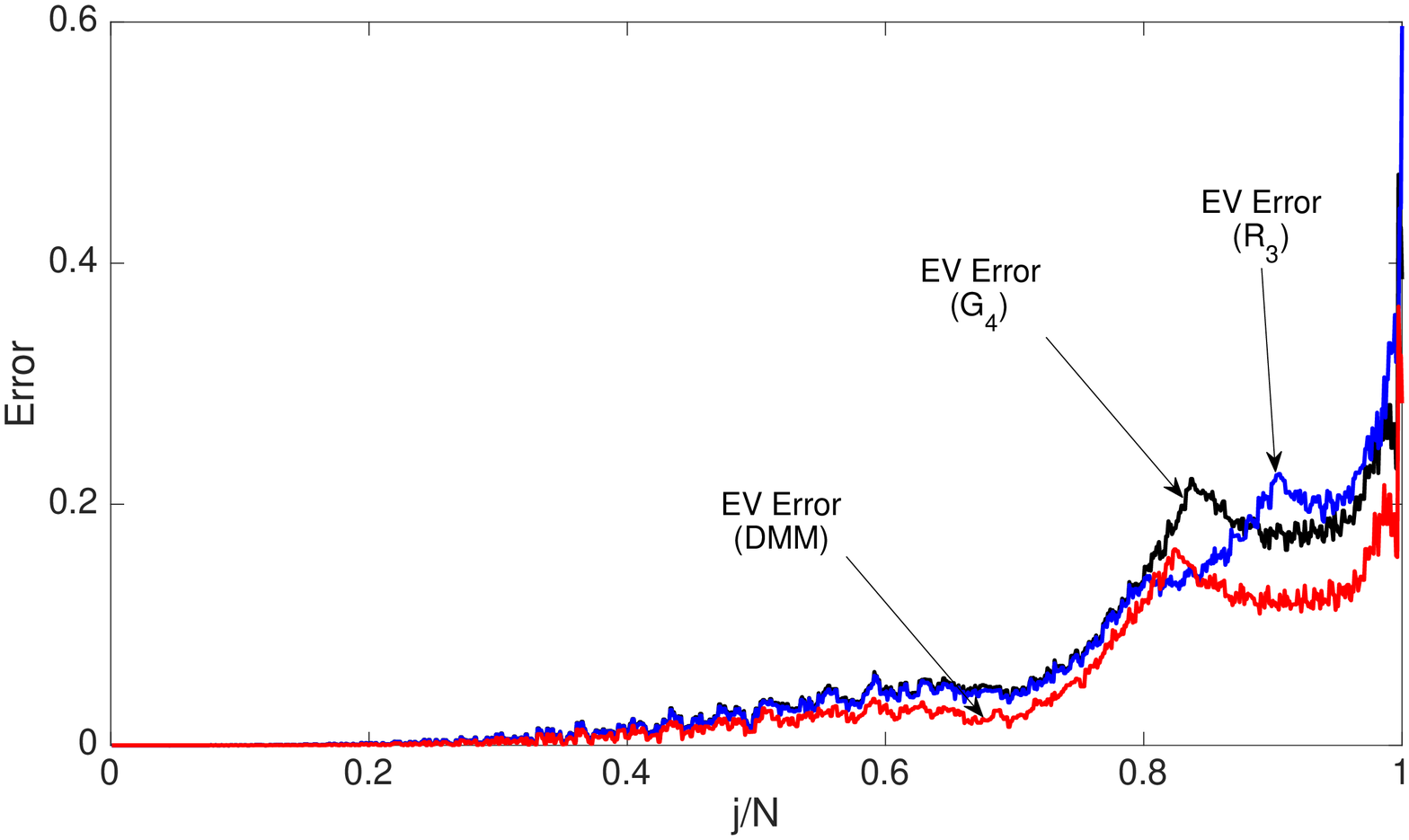} 
\caption{Relative eigenvalue (EV) errors for $C^2$ cubic isogeometric analysis with fully integrated mass ($G_4$), underintegrated mass ($R_3$), and dispersion-minimized mass (DMM) in 2D. }
\label{fig:p3eve2d}
\end{figure}

Table \ref{tab:eve2d} shows the relative eigenvalue errors while fixing the wave numbers and varying the mesh sizes. We present the errors for the first, second, and fourth eigenmodes. We observe the same convergence behavior as in 1D. In the view of Section \ref{sec:md}, the eigenvalue and eigenfunction error behaviors in multiple dimensions coincide with those in one dimension due to the tensor-product structure. Therefore, we omit the numerical results for three dimensions herein.

\begin{table}[ht]
\centering 
\begin{tabular}{| c | c | c | c | c | c | c | c| c| c| c| c|}
\hline
%\multicolumn{10}{|c|}{$p=2$}  \\[0.1cm] \hline
 \multicolumn{2}{|c|}{Set} &  \multicolumn{3}{c|} {$ |\lambda_1^h - \lambda_1| / \lambda_1$ }  & \multicolumn{3}{c|} {$ |\lambda_2^h - \lambda_2| /  \lambda_2$}  & \multicolumn{3}{c|} {$ |\lambda_4^h - \lambda_4| /  \lambda_4$}   \\[0.1cm] \hline
$p$ &  $N$ & $G_{p+1}$ & $R_p$ & DMM & $G_{p+1}$ & $R_p$ & DMM & $G_{p+1}$ & $R_p$ & DMM  \\[0.1cm] \hline

	& 8	& 3.4e-5	& 3.6e-6	& 6.7e-7	& 4.9e-4	& 6.7e-5	& 3.5e-5	& 6.0e-4	& 8.3e-5	& 4.3e-5 \\[0.1cm]
	& 16	& 2.1e-6	& 4.5e-7	& 1.0e-8	& 2.8e-5	& 6.3e-6	& 5.4e-7	& 3.4e-5	& 7.7e-6	& 6.7e-7 \\[0.1cm]
2	& 32	& 1.3e-7	& 3.5e-8	& 1.6e-10	& 1.7e-6	& 4.7e-7	& 8.4e-9	& 2.1e-6	& 5.8e-7	& 1.0e-8 \\[0.1cm]
	& 64	& 8.1e-9	& 2.4e-9	& 2.4e-12	& 1.1e-7	& 3.2e-8	& 1.3e-10	& 1.3e-7	& 3.9e-8	& 1.6e-10 \\[0.1cm] \hline
 \multicolumn{2}{|c|}{$\rho_2$}  & 4.0 & 3.5 & 6.0 & 4.1 & 3.7 & 6.0 & 4.1 & 3.7 & 6.0 \\[0.1cm] \hline

	& 4	& 9.7e-6	& 8.8e-6	& 1.7e-6	& 7.9e-4	& 7.4e-4	& 3.6e-4	& 9.9e-4	& 9.3e-4	& 4.5e-4 \\[0.1cm]
	& 8	& 1.3e-7	& 1.2e-7	& 7.3e-9	& 8.1e-6	& 7.3e-6	& 1.6e-6	& 1.0e-5	& 9.1e-6	& 2.0e-6 \\[0.1cm]
3	& 16	& 1.9e-9	& 1.7e-9	& 2.9e-11	& 1.0e-7	& 9.3e-8	& 6.1e-9	& 1.3e-7	& 1.2e-7	& 7.6e-9 \\[0.1cm]
	& 32	& 3.0e-11	& 2.6e-11	& 1.9e-13	& 1.6e-9	& 1.4e-9	& 2.4e-11	& 1.9e-9	& 1.7e-9	& 3.0e-11 \\[0.1cm] \hline
 \multicolumn{2}{|c|}{$\rho_3$}  & 6.1 & 6.1 & 7.7 & 6.3 & 6.3 & 8.0 & 6.3 & 6.3 & 8.0 \\[0.1cm] \hline
 
 \end{tabular}
\caption{Relative eigenvalue (EV) errors for $C^1$ quadratic and $C^2$ cubic isogeometric analysis with fully integrated mass ($G_{p+1}$), underintegrated mass ($R_p$), and dispersion-minimized mass (DMM) in 2D.}
\label{tab:eve2d} 
\end{table}

\section{Concluding remarks} \label{sec:conclusion}
The paper firstly establishes new facts on the stiffness and mass entries of the isogeometric elements using B-splines. These facts are essential to derive the dispersion and eigenvalue errors for arbitrary order B-splines. The natural and explicit relations between the stiffness and mass entries motivate us to develop the dispersion-minimized mass for the isogeometric elements. The dispersion-minimized mass leads to superconvergence of order $2p+2$ on eigenvalue errors. 

An equivalence between the dispersion-minimized mass and the optimal quadrature blending is then established in the view of the dispersion error. The optimally blended quadratures lead to the dispersion-minimized mass entries.  We generalize the optimal quadrature blending rules introduced in \cite{calo2017dispersion} from $p=7$ to arbitrary order. Comparing with the blending proposed in \cite{calo2017dispersion}, the dispersion minimizing quadrature is not limited to the blending  classical quadrature rules. For the $p$-th order isogeometric elements, the blending procedure can be applied to blend two arbitrary quadrature rules which fully integrate polynomials up to order $2p-2$. 

We generalize these results to mixed isogemetric elements for $2n$-order differential eigenvalue problems, which include the Cahn-Hilliard, Swift-Hohenberg, and Phase-field crystal operators. We will report our results in the near future.

Other future work includes (1) providing proofs for the identities and postulates we assert in this paper, (2) generalizations of the analysis for the dispersion-minimized mass (DMM) to non-uniform meshes and variable diffusion coefficients, (3) generalization of dispersion-minimized mass to isogeometric elements with variable continuities,  finite elements, and other methods. In general, the generalizations rely on the dispersion-minimized mass conditions \eqref{eq:dmmc0} posed for the mass entries, which is a set of linear problems on the mass entries. These are subject to future investigations.

\section*{Acknowledgments} 
This publication was made possible in part by the CSIRO Professorial Chair in Computational Geoscience at Curtin University and the Deep Earth Imaging Enterprise Future Science Platforms of the Commonwealth Scientific Industrial Research Organisation, CSIRO, of Australia. Additional support was provided by the European Union's Horizon 2020 Research and Innovation Program of the Marie Sk{\l}odowska-Curie grant agreement No. 644202 and the Curtin Institute for Computation. The J. Tinsley Oden Faculty Fellowship Research Program at the Institute for Computational Engineering and Sciences (ICES) of the University of Texas at Austin has partially supported the visits of VMC to ICES. The Spring 2016 Trimester on ``Numerical methods for PDEs", organised  with the collaboration of the Centre Emile Borel at the Institut Henri Poincare in Paris supported VMC's visit to IHP in October, 2016.

%\section*{References}

%\bibliographystyle{plain}
%\bibliographystyle{siam}
%\bibliography{ref.bib}
%\bibliographystyle{siamplain}

%\bibliographystyle{siam}

%\bibliography{igaref}

\end{document}